\theoremstyle{plain}
\newtheorem{theorem}{Theorem}
\newtheorem{lemma}[theorem]{Lemma}
\newtheorem{proposition}[theorem]{Proposition}
\theoremstyle{definition}
\newtheorem*{remark*}{Remark}
\begin{document}
\title[Ordered random walks with heavy tails]{Ordered random walks with heavy tails}
\author[Denisov]{Denis Denisov}

\author[Wachtel]{Vitali Wachtel}

\thanks{Supported by the DFG}
\maketitle

\begin{center}
\begin{minipage}{6cm}
\begin{center}
{School of Mathematics\\
Cardiff University}\\
Senghennydd Road, CF24 4AG\\
Cardiff, Wales, UK\\
{DenisovD@cf.ac.uk}
\end{center}
\end{minipage}
\begin{minipage}{6cm}
\begin{center}
{Mathematical Institute\\
 University of Munich,\\
 Theresienstrasse 39, D--80333\\
Munich, Germany}\\
{wachtel@mathematik.uni-muenchen.de}
\end{center}
\end{minipage}
\end{center}
\vspace{1cm}



{\small
{\bf Abstract.}
This note continues paper of Denisov and Wachtel (2010), where  
we have constructed a $k$-dimensional random walk conditioned to stay in the Weyl chamber of type $A$. 
The  construction was done  under the assumption that the original random walk has $k-1$ moments. 
In this note we continue the study of killed random walks in the Weyl chamber, 
and assume that the tail of increments is regularly varying of index $\alpha<k-1$. 
It appears that the asymptotic behaviour of random walks is different in this case. 
We determine the asymptotic behaviour of the exit time, and, using this
information, construct a conditioned process which lives on a partial compactification of the Weyl chamber.
\\[1cm]
{\bf Key words: } Dyson's Brownian Motion, Doob $h$-transform, superharmonic function, Weyl chamber,
Martin boundary\\
{\bf AMS Subject Classification: } Primary 60G50; Secondary 60G40, 60F17\\
}
\section{Main results and discussion}
\subsection{Introduction}
This note is a continuation of our paper \cite{DW10}.  
In \cite{DW10} we constructed a $k$-dimensional random walk conditioned 
to stay in the Weyl chamber of type $A$.  
The condional version of the random walk was defined via Doob's $h$-transform. 
The form of the corresponding harmonic function has been suggested by Eichelsbacher and K\"onig \cite{EK08}. 
This construction was performed under the optimal moment conditions and required the existence of 
$k-1$ moments of the random walk.

The main aim of the present work is to consider the case, when that moment condition is not fulfilled.  
Instead of the existence of $(k-1)$-th moment of the increment, 
we shall assume that the tail function is regularly varying of index $2<\alpha<k-1$. 
This assumption significantly changes the behaviour of the random walk. It turns 
out that the asymptotic behaviour of the exit time from the Weyl chamber depends not only on the number of walks
but also on the index $\alpha$. The typical sample path behaviour for the occurence of large exit times 
is different as well.  The main reason for that is that the large exit times are caused by one (or several) 
big jumps of the random walk. 

We now introduce some notation. 
Let $S=(S_1,S_2,\ldots,S_k)$ be a $k$-dimensional random walk with
$$
S_j(n)=\sum_{i=1}^n X_j(i),
$$ 
where $\{X_j(i)\}_{i,j\geq1}$ are independent copies of a random variable $X$. Let $W$ denote the Weyl chamber of 
type $A$, i.e.,
$$
W=\left\{x\in\mathbb{R}^k:\, x_1<x_2<\ldots<x_k\right\}.
$$
Let $\tau_x$ denote the first exit time of random walk with starting point $x\in W$, that is,
$$
\tau_x:=\min\{n\geq1:\,x+S(n)\notin W\}.
$$
The main purpose of the present paper is to study the asymptotic behaviour of $\mathbf{P}(\tau_x>n)$
and  to construct a model for ordered random walks. 
Recall that in order to define a  random walk conditioned to stay in $W$,  
one should find a Doob $h$-transform 
$$
\mathbf E[h(x+S(1)),\tau_x>1]=h(x)>0,x\in W. 
$$
We say that the function which satisfies the latter condition is harmonic. 
However,  it seems that it is not possible to find a harmonic function for the Doob $h$-transform under present conditions. 
Therefore, we use a partial compactification of $W$, which is based on the of sample path behaviour 
of the random walk  $S$ on the event $\{\tau_x>n\}$. (Recall that a more formal way consists in applying an
$h$-transform with a harmonic function.) Finally, we prove a functional limit theorem for 
random walks conditioned to stay in the Weyl chamber up to big, but finite, time.

To simplify our proofs we shall restrict our attention 
to the case $\alpha\in(k-2,k-1)$. However, it will be clear from the proof, that our method works also for 
smaller values of $\alpha$.

\subsection{Tail distribution of $\tau_x$}
We shall assume that $\mathbf{E}X=0$. This assumption does not restrict the generality, since $\tau_x$ 
depends only on differences of coordinates of the random walk $S$. We consider a situation when 
increments have $k-2$ finite moments, i.e., $\mathbf{E}|X|^{k-2}<\infty$. Under this condition, for
$(S_1,S_2,\ldots,S_{k-1})$ we can construct a harmonic function by using results of \cite{DW10}. 
Denote this function by $V^{(k-1)}(x)$. It is easy to see that this function is 
superharmonic for our original $k$-dimensional random walk, i.e.,
$$
\mathbf{E}\left[V^{(k-1)}(x+S(1)),\tau_x>1\right]\leq V^{(k-1)}(x)
$$
and the inequality is strict at least for one $x\in W$.
Denote
\begin{align*}
v(x)&:=pv_1(x)+qv_2(x)\\
&:=pV^{(k-1)}(x_1,x_2,\ldots,x_{k-1})+qV^{(k-1)}(x_2,x_3,\ldots,x_{k}).
\end{align*}
This function is also superharmonic for all $p,q\geq0$.

To state our first result we introduce a convolution of $v$ with the Green function of random walk
in the Weyl chamber:
$$
U(x):=\sum_{l=0}^{\infty}\mathbf{E}\left[v(x+S(l)),\tau_x>l\right],\quad x\in W.
$$

\begin{theorem}\label{T1}
Assume that 
\begin{equation}\label{RegVar}
\mathbf{P}(X>x)\sim \frac{p}{x^{\alpha}}\text{ and }\mathbf{P}(X<-x)\sim \frac{q}{x^{\alpha}}\text{, as }x\to\infty,
\end{equation}
for some $\alpha\in(k-2,k-1)$ and some $k\geq4$. Then $U(x)$ is a strictly positive superharmonic function, i.e.,
$\mathbf{E}\left[U(x+S(1)),\tau_x>1\right]<U(x)$ for all $x\in W$. Moreover,
\begin{equation}
\label{T1.1}
\mathbf{P}(\tau_x>n)\sim\theta U(x)n^{-\alpha/2-(k-1)(k-2)/4}\quad\text{as }n\to\infty,
\end{equation}
where $\theta$ is an absolute constant.
\end{theorem}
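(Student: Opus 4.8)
\textbf{Sketch of the proof of Theorem~\ref{T1}.}

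The heuristic behind \eqref{T1.1} is that for $\alpha\in(k-2,k-1)$ the event $\{\tau_x>n\}$ is typically realised by a \emph{single} large jump: at some (typically bounded) time $X_k$ jumps by an amount of order $\sqrt n$, so that $S_k$ is lifted far above $S_{k-1}$ and it only remains for $S_1,\dots,S_{k-1}$ to stay ordered; or, symmetrically, $X_1$ makes a large negative jump and $S_2,\dots,S_k$ stays ordered. Since $\mathbf E|X|^{k-2}<\infty$, $V^{(k-1)}$ is the harmonic function of \cite{DW10} for each of the two $(k-1)$-dimensional sub-walks, and the probability that the corresponding $k-1$ coordinates stay ordered up to time $m$ starting from a point $(\cdot)$ is $\sim\varkappa_{k-1}V^{(k-1)}(\cdot)\,m^{-(k-1)(k-2)/4}$, while by \eqref{RegVar} a jump of size $u\sqrt n$ costs $\sim p\,u^{-\alpha}n^{-\alpha/2}$; the product of the two powers is the one in \eqref{T1.1}, and decomposing over the time $l$ at which the big jump occurs — before which $S$ is an ordinary walk in $W$, contributing $\mathbf E[v(x+S(l)),\tau_x>l]$ — produces the series $U(x)$. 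I would split the proof into: (1)~$U$ is finite, strictly positive, and superharmonic; (2)~the upper bound in \eqref{T1.1}; (3)~the matching lower bound.

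For step~(2) fix a small $\varepsilon>0$, let $T$ be the first time some coordinate of $S$ has an increment of modulus exceeding $\varepsilon\sqrt n$, and split $\{\tau_x>n\}$ according to whether (i)~$T>n$; (ii)~at least two such increments occur; (iii)~exactly one, but neither an upward jump of $X_k$ nor a downward jump of $X_1$; (iv)~exactly one, an upward jump of $X_k$; (v)~exactly one, a downward jump of $X_1$. Cases~(i)--(iii) are $o\bigl(n^{-\alpha/2-(k-1)(k-2)/4}\bigr)$: for (i) a Fuk--Nagaev/truncation estimate bounds the probability by the light-tailed rate $n^{-k(k-1)/4}$, which is smaller since $\alpha<k-1$; (ii) contributes an extra factor $n^{-(\alpha-k+2)/2}$, which tends to $0$ since $\alpha>k-2$; and (iii) is of lower order because such a jump, if it occurs early, destroys the order at that very step, while if it occurs late it is preceded by a survival of cost $\asymp n^{-k(k-1)/4}$. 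In case~(iv) one conditions on $\{T=l+1\}$ and on the jump size $y>\varepsilon\sqrt n$; on $[l+1,n]$ the coordinates $S_1,\dots,S_{k-1}$ must stay ordered and $S_k$ must stay above $S_{k-1}$, and using the limit theorems of \cite{DW10} in the diffusive rescaling $y=u\sqrt n$, the conditional probability given $\mathcal F_l$ of such a jump followed by survival to time $n$ equals
$$
\bigl(1+o(1)\bigr)\,p\,\alpha\,v_1(x+S(l))\,n^{-\alpha/2-(k-1)(k-2)/4}\int_{\varepsilon}^{\infty}u^{-\alpha-1}g(u)\,du,
$$
where $g(u)=\varkappa_{k-1}\,\mathbf P\bigl(u+\beta(t)>\lambda(t)\ \text{for all }t\in[0,1]\bigr)$, with $\beta$ a Brownian motion independent of the top particle $\lambda$ of the $(k-1)$-dimensional Dyson Brownian motion started at the origin; this integral converges at $\infty$ because $g$ is bounded and at $0$ because $g(u)=O(u^{k-1})$ while $\alpha<k-1$. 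Summing over $l$ and letting $n\to\infty$ gives $p\,\alpha\bigl(\int_{\varepsilon}^{\infty}u^{-\alpha-1}g(u)\,du\bigr)n^{-\alpha/2-(k-1)(k-2)/4}\sum_{l}\mathbf E[v_1(x+S(l)),\tau_x>l]$; case~(v) yields the symmetric term with $q$ and $v_2$; adding them, using $v=pv_1+qv_2$, and then letting $\varepsilon\to0$ gives $\limsup_n n^{\alpha/2+(k-1)(k-2)/4}\mathbf P(\tau_x>n)\le\theta U(x)$ with $\theta=\alpha\int_0^{\infty}u^{-\alpha-1}g(u)\,du$.

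For step~(1), the identity $\mathbf E[U(x+S(1)),\tau_x>1]=U(x)-v(x)$ and strictness $v>0$ on $W$ follow immediately from the definition of $U$ once $U(x)<\infty$ is known. Finiteness is the one delicate point: $v$ is a polynomial of degree $(k-1)(k-2)/2$, larger than the order of the available moments, so one cannot simply estimate $\mathbf E[v(x+S(l))^2]$. One first proves a crude bound $\mathbf P(\tau_x>l)\le C_x\,l^{-\alpha/2-(k-1)(k-2)/4+o(1)}$ by the same large-jump/truncation dichotomy as in step~(2), and then, using that on $\{\tau_x>l\}$ the coordinates $1,\dots,k-1$ (respectively $2,\dots,k$) are ordered, controls $\mathbf E[v(x+S(l)),\tau_x>l]$ by the \cite{DW10} theory for $k-1$ particles; here $\alpha>k-2$ is exactly what lets one absorb the polynomial growth that $v$ acquires when some \emph{other} coordinate has performed a large jump, giving $\mathbf E[v(x+S(l)),\tau_x>l]=O(l^{-\alpha/2})$, which is summable since $\alpha>2$ (as $k\ge4$). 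Step~(3) mirrors step~(2): one keeps only scenarios~(iv)--(v) with the additional restrictions $l\le L$ and $y\in[\varepsilon\sqrt n,\varepsilon^{-1}\sqrt n]$, bounds the corresponding probability from below by the same limit theorems, and lets $n\to\infty$, then $L\to\infty$ and $\varepsilon\to0$, recovering $\theta U(x)$ and hence \eqref{T1.1}.

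The main obstacle is the analysis of the interval $[l+1,n]$ in case~(iv): one must show that in the diffusive limit the motion of the first $k-1$ coordinates conditioned to stay ordered \emph{decouples} from the constraint that the freely evolving top coordinate remain above the $(k-1)$-st, identify the limiting functional $g$, and do so with enough uniformity in the rescaled jump size $u$ on $(0,\infty)$ to permit the passages $\varepsilon\to0$ and $L\to\infty$. This is precisely where the window $\alpha\in(k-2,k-1)$ enters — the lower endpoint for the convergence of the series defining $U$, the upper endpoint for the convergence of the integral defining $\theta$ — and it is where the bulk of the technical work is concentrated: local limit theorems for the conditioned walk, coupling of the ordered walk with Dyson Brownian motion, and moment and tail estimates for the conditioned process.
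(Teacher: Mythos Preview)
Your architecture is the paper's: single-big-jump heuristic, \cite{DW10} theory for the surviving $k-1$ walks after the jump, identification of the constant via a Brownian functional (your $g$ is, up to normalisation, the paper's $\psi$ of Lemma~\ref{L4}), and a final passage $\varepsilon\to0$. The only structural difference in step~(2) is that you track the first large increment of \emph{any} coordinate, which forces separate treatment of your cases~(ii) and~(iii). The paper is more economical: it sets $T^+=\min\{l:X_k(l)>a\sqrt n\}$, $T^-=\min\{l:X_1(l)<-a\sqrt n\}$ and lumps everything else into the single event $A_n(a)=\{T^+\wedge T^->n\}$, for which Lemma~\ref{L2} gives $\mathbf{P}(\tau_x>n,A_n(a))\le Ca^{k-1-\alpha}n^{-\alpha/2-(k-1)(k-2)/4}$ --- the \emph{same} order, with a constant that vanishes as $a\to0$. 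This absorbs your~(i)--(iii) in one stroke and, in particular, bypasses the early/late dichotomy you sketch for~(iii), which as written leaves the intermediate range of jump times unhandled.

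The one genuine gap is your step~(1). You propose to first get a crude bound $\mathbf{P}(\tau_x>l)\le C_x\,l^{-\alpha/2-(k-1)(k-2)/4+o(1)}$ ``by the same dichotomy as in step~(2)'' and then deduce $\mathbf{E}[v(x+S(l)),\tau_x>l]=O(l^{-\alpha/2})$. But step~(2) already uses $U(x)<\infty$ to sum over jump times $l\ge N$, so this is circular; and even granting a tail bound on $\mathbf{P}(\tau_x>l)$, it does not by itself control the expectation of the degree-$(k-1)(k-2)/2$ polynomial $v(x+S(l))$ on $\{\tau_x>l\}$ --- you cannot H\"older, and the \cite{DW10} martingale identity for $v_1$ under $\{\tau_x^{(1)}>l\}$ only gives the trivial bound $v_1(x)$. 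The paper attacks $E_n:=\mathbf{E}[v(x+S(n)),\tau_x>n]$ directly (Proposition~\ref{P1}): the big-jump decomposition produces a recursion
\[
E_n\le Cn^{-\alpha/2+\delta}\sum_{l\le n}E_l\;+\;Cn^{-\gamma(k-1)/2}E_{n^{1-\gamma}}\;+\;C(x)n^{-\alpha/2},
\]
which is bootstrapped from the supermartingale bound $E_n\le v(x)$; after finitely many iterations $E_n$ becomes summable. Your claimed $E_l=O(l^{-\alpha/2})$ is correct \emph{a posteriori}, but it is the content of Section~2, not a corollary of a tail estimate on $\tau_x$.
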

There is a very simple strategy behind formula (\ref{T1.1}). 
For the event  $\{\tau_x>n\}$ to occur 
either the random walk
on the top or the random walk on the bottom should jump away, i.e., $X_k(l)\approx\sqrt{n}$ or
$X_1(l)\approx-\sqrt{n}$ for some $l\geq1$. After such a big jump we have a system of $k-1$ random walks with bounded
distances between each other and one random walk on the characteristic distance $\sqrt{n}$. 
This implies that the probability
that all $k$ random walks stay in W up to time $n$ is of the same order as the probability that $k-1$ random walk stay in 
$W$ up to time $n-l$. But it follows from (\ref{RegVar}) that $\mathbf{E}[|X|^{k-2}]<\infty$. So we can apply Theorem 1 
from \cite{DW10}, which says that the latter probability is of order $n^{-(k-1)(k-2)/4}$. 
Since $\mathbf{P}(|X|>\sqrt{n})\sim n^{-\alpha/2}$, we see that $\mathbf{P}(\tau_x>n)$ is of order $n^{-\alpha/2-(k-1)(k-2)/4}$.
This strategy sheds also some light on the structure of the function $U(x)$: 
the $l$-th summand in the series corresponds to the
case, when big jump occurs at time $l+1$.
\subsection{Construction of a conditioned random walk}
Since $U$ is not harmonic, we can not use the Doob $h$-transform with this function to define a random walk, 
conditioned to stay in $W$ for all times. (More precisely, an $h$-transform with a superharmonic function 
leads to strict substohastic transition kernel.) 
An alternative approach via distributional limit does not work as well: using 
Theorem \ref{T1} we can define $\hat{P}(x,A)$ for any $x\in W$ and for any bounded $A\subset W$ by the relation
\begin{align*}
\hat{P}(x,A)&=\lim_{n\to\infty}\mathbf{P}(x+S(1)\in A|\tau_x>n)\\
&=\lim_{n\to\infty}\int_{A}\mathbf{P}(x+S(1)\in dy,\tau_x>1)\frac{\mathbf{P}(\tau_y>n-1)}{\mathbf{P}(\tau_y>n)}\\
&=\int_{A}\mathbf{P}(x+S(1)\in dy,\tau_x>1)\frac{U(y)}{U(x)}\\
&=\frac{\mathbf{E}\left[U(x+S(1)),\tau_x>1,x+S(1)\in A\right]}{U(x)}.
\end{align*}
Then we can extend $\hat{P}(x,\cdot)$ to a finite measure on the Borel subsets of $W$. But this measure is not probabilistic, since
$$
\hat{P}(x,W)=\frac{\mathbf{E}\left[U(x+S(1)),\tau_x>1\right]}{U(x)}=\frac{U(x)-v(x)}{U(x)}<1.
$$

We loose the mass because of an ``infinite'' jump in the first step. Indeed, according to the optimal strategy in Theorem \ref{T1},
one of the random walks should have a jump of order $n^{1/2}$, and we let $n$ go to infinity. This infinite jump is the reason,
why a Markov chain, corresponding to the kernel $\hat{P}(x,A)$ has almost sure finite lifetime. Similar effects have been observed already
in other models. Bertoin and Doney \cite{BD94} have proven that a one-dimensional random walk with negative drift and regularly
varying tail conditioned to stay positive has finite lifetime. Jacka and Warren \cite{JW02} have shown that the same effect appears
in the Kolmogorov K2 chain. 

Having in mind this picture with ``infinite'' jumps, we can construct a conditioned random walk, which lives on the following set
$$
\hat{W}:=W\cup W_1\cup W_2,
$$
where
\begin{align*}
W_1&=\left\{(x_1,x_2,\ldots,x_{k-1},\infty),\,x_1<x_2<\ldots<x_{k-1}\right\}\\
W_2&=\left\{(-\infty,x_2,x_3,\ldots,x_k),\,x_2<x_3<\ldots<x_k\right\}.
\end{align*}
We define the transition probability by the following relations:
\begin{enumerate}
\item If $x\in W$ and $A\subset W$, then 
$$
\hat{P}(x,A)=\frac{\mathbf{E}\left[U(x+S(1)),\tau_x>1,x+S(1)\in A\right]}{U(x)}.
$$
\item If $x\in W$ and $A=A'\times\{\infty\}\subset W_1$, then
$$
\hat{P}(x,A)=\frac{p\mathbf{E}\left[v_1(x+S(1)),\tau_x^{(1)}>1,x+S(1)\in A'\right]}{U(x)}.
$$
\item If $x\in W$ and $A=\{-\infty\}\times A'\subset W_2$, then
$$
\hat{P}(x,A)=\frac{q\mathbf{E}\left[v_2(x+S(1)),\tau_x^{(2)}>1,x+S(1)\in A'\right]}{U(x)}.
$$
\item If $x\in W_1$ and $A=A'\times\{\infty\}\subset W_1$, then
$$
\hat{P}(x,A)=\frac{\mathbf{E}\left[v_1(x+S(1)),\tau_x^{(1)}>1,x+S(1)\in A'\right]}{v_1(x)}.
$$
\item If $x\in W_2$ and $A=\{-\infty\}\times A'\subset W_2$, then
$$
\hat{P}(x,A)=\frac{\mathbf{E}\left[v_2(x+S(1)),\tau_x^{(2)}>1,x+S(1)\in A'\right]}{v_2(x)}.
$$
\end{enumerate}
Here
$$
\tau^{(i)}_x:=\min\{n\geq1:\,x+S^{(i)}\notin W\},\ i=1,2
$$
and
$$
S^{(1)}:=(S_1,S_2,\ldots,S_{k-1})\text{ and }S^{(2)}:=(S_2,S_3,\ldots,S_k).
$$

The asymptotic behaviour of the corresponding Markov chain, say $\{\hat{S}(n),n\geq0\}$, 
can be described as follows. 
One of the random walks jumps away at time $m$ with probability 
$\mathbf{E}\left[v(x+S(m-1)),\tau_x>m-1\right]/U(x)$. Then we restart our process, which 
has from now on one frozen coordinate, either $-\infty$ or $\infty$, and $k-1$ ordered 
random walks. But for $k-1$ random walks we can apply Theorem 3 of \cite{DW10}. As a result 
we have that the limit of $\left\{\hat{S}([nt])/\sqrt{n},t\in[r_n/n,1]\right\}$ converges 
weakly to a process $\{X(t),t\in(0,1]\}$, where $r_n$ is such that $r_n\to\infty$. (We need 
this additional restriction because of jumps at bounded times.) The limit can be constructed 
as follows: Let $D(t)$ denote here the $(k-1)$-dimensional Dyson Brownian motion starting 
from zero. With some probability $p(x)$ we add to $D(t)$ one coordinate with constant value 
$\infty$, and with probability $q(x)=1-p(x)$ we add the coordinate with value $-\infty$. 

We have constructed a model of ordered random walks on an enlarged state space by formalising 
an intuitive picture of big jumps. But it remains unclear whether one can find a 
harmonic function for the substochastic kernel $\mathbf{P}(x+S(1)\in dy,\tau_x>1)$. If such 
a function exists, then one can construct a model of ordered random walks on the original
Weyl chamber. We conjecture that there are no harmonic functions for ordered random walks with
heavy tails. The examples from \cite{BD94,JW02}, which we have mentioned above, support this conjecture.

The most standard way to describe the set of harmonic functions consists in the study
of the corresponding Martin boundary. We found only a few results on Martin boundary for killed 
random walks. Doney \cite{Don98} found sufficient and necessary conditions 
for existence of harmonic functions in one-dimensional case. 
The proof relies on the Wiener-Hopf factorisation, which seems to work in the 
one-dimensional case only.
In a series of papers \cite{IR08, IRL10,IR10} by Ignatiouk-Robert, and by Ignatiouk-Robert and Loree
Martin boundaries for killed random walks with non-zero drift in a half-space and in a quadrant have 
been studied. In all these papers the Cramer condition has been imposed.
Next-neighbour random walks with zero mean in the Weyl chamber have been studied by Raschel \cite{Rasch09a,Rasch09b}.
In our situation all the increments are heavy-tailed. This means that one needs another method for finding the 
Martin boundary.
\subsection{Conditional limit theorem for $S$}
In this paragraph we turn our attention to the behaviour of $\{S([nt])/\sqrt{n},\,t\leq1\}$ conditioned on $\{\tau_x>n\}$.
Since one of the random walks should have a jump of order $\sqrt{n}$ on the event $\{\tau_x>n\}$, 
this conditioning will not lead to an infinite jump, as it happens in the case of conditioning on $\{\tau_x=\infty\}$.

We define
$$
X^{(n)}(t):=\frac{x+S([nt]\wedge r_n)}{\sqrt{n}},\quad t\in[0,1].
$$
Here $r_n\to\infty$ and $r_n=o(n)$. (Again, we need to go away from zero, because of a big jump occurring at the
very beginning.) In order to state our limit theorem we have to introduce a limiting process, say $X$.
The distribution of the starting point, $X(0)$, is given by
$$
\mu_x(dy)=q(x)f(-y_1)dy_1\prod_{i=2}^k\delta_0(dy_i)+p(x)f(y_k)dy_k\prod_{i=1}^{k-1}\delta_0(dy_i),
$$
where $f(x)=\theta^{-1}\psi(x)x^{-\alpha-1}{\rm 1}_{\mathbb{R}_+}(x)$ with $\psi$ defined in (\ref{L4.5}), and
$$
p(x):=\frac{p\sum_{l=0}^{\infty}\mathbf{E}\left[v_1(x+S(l)),\tau_x>l\right]}{U(x)},\ 
q(x):=\frac{q\sum_{l=0}^{\infty}\mathbf{E}\left[v_2(x+S(l)),\tau_x>l\right]}{U(x)}.
$$
Further, given $X(0)=y$, we define
$$
\mathcal{L}\left(X\right)=\lim_{a\to0}
\mathcal{L}\left(y(a)+B(t),t\in[0,1]\Big|\tau_{y(a)}^{bm}>1\right), 
$$
where $y(a)=y+a(0,1,2,\ldots,k-1)$.
\begin{theorem}
\label{T2}
Under the conditions of Theorem \ref{T1},
$$
\{X^{(n)}|\tau_x>n\}\Rightarrow X
$$
in the Skorohod topology on $C[0,1]$.
\end{theorem}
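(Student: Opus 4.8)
The plan is to reduce Theorem~\ref{T2} to two essentially separate ingredients. The first is a ``single big jump'' decomposition of the event $\{\tau_x>n\}$ that isolates the first macroscopic jump of the walk and identifies its time, its direction (top or bottom coordinate) and its rescaled size; this produces the starting law $\mu_x$ and is the sole reason one must truncate the process at a time $r_n\to\infty$. The second is a conditional invariance principle for the remaining, \emph{effectively light-tailed}, $k$-dimensional walk started from the post-jump configuration; this produces the conditioned Brownian motion on $(0,1]$. Throughout, fix a small $\varepsilon>0$ and call step $l$ a \emph{big jump} if $|X_j(l)|\ge\varepsilon\sqrt n$ for some $j$; at the very end one lets $\varepsilon\to0$.

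\emph{Step 1 (the big jump and $\mu_x$).} Re-examining the decomposition that underlies the proof of Theorem~\ref{T1}, where the heuristic following (\ref{T1.1}) is in fact turned into matching upper and lower bounds, I would establish that, conditionally on $\{\tau_x>n\}$, with probability tending to $1$ there is exactly one big jump; it lies in coordinate $k$ with probability tending to $p(x)$ and in coordinate $1$ with probability tending to $q(x)$; and its time $\sigma$ is tight, so $\sigma\le r_n$ with conditional probability tending to $1$ (this is the only role of $r_n\to\infty$). Moreover, on $\{\sigma=l\}$ and, say, a top jump, the pre-jump walk stays in $W$ and satisfies $(x+S(l-1))/\sqrt n\to0$ in probability, while $X_k(l)/\sqrt n$ converges in law to the density $f$: the Pareto tail (\ref{RegVar}) supplies the factor $x^{-\alpha-1}$, reweighting by the probability that the $k-1$ low coordinates survive when the top coordinate sits at height $x\sqrt n$ supplies the factor $\psi(x)$ from (\ref{L4.5}), and $\theta$ is the normalisation from Theorem~\ref{T1}. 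Since the walk moves only $O(\sqrt{r_n})=o(\sqrt n)$ between times $\sigma$ and $r_n$, it follows that $X^{(n)}$ restricted to $[0,r_n/n]$ converges, jointly with the above and in probability, to the constant path at a $\mu_x$-distributed point; this identifies $\mathcal L(X(0))=\mu_x$, and the initial frozen segment collapses to $X(0)$ because $r_n/n\to0$.

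\emph{Step 2 (after the jump: convergence to conditioned Brownian motion).} Condition now on $\sigma=l\le r_n$, a top jump, and $X_k(l)/\sqrt n\to c$ with $c\sim f$. The post-jump walk $\{x+S(l+m):0\le m\le n-l\}$, conditioned on $\{\tau_x>n\}$, is a $k$-dimensional walk from a point $x'$ with $x'_i/\sqrt n\to0$ $(i<k)$ and $x'_k/\sqrt n\to c$, conditioned to stay in $W$ for $n-l\sim n$ further steps. First I would show that on this event there is again, with probability tending to $1$, no further big jump: a second macroscopic jump would force a sub-collection of at most $k-2$ coordinates to stay ordered, contributing a factor $O(n^{-(k-2)(k-3)/4})$ on top of the two jump factors $n^{-\alpha/2}$, so the two-big-jump contribution is $O(n^{-\alpha-(k-2)(k-3)/4})$, which is $o(n^{-\alpha/2-(k-1)(k-2)/4})$ exactly when $\alpha>k-2$ --- this is precisely why the restriction $\alpha\in(k-2,k-1)$ is imposed. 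Consequently, off an event of vanishing conditional probability the post-jump walk coincides with its truncation $\tilde S$ (increments of modulus $\ge\varepsilon\sqrt n$ replaced by $0$), whose increments have uniformly bounded moments of every order and obey Donsker's theorem. One is thus in the light-tailed regime, and the argument of \cite[Theorem~3]{DW10}, carried out for the triangular array $\tilde S$ together with the entrance-from-the-corner analysis accompanying it, yields that $X^{(n)}$ restricted to $[r_n/n,1]$ converges to the Brownian motion conditioned to stay in $W$ up to time $1$ and started from $x'/\sqrt n$, i.e.\ to $\mathcal L(y(a)+B(\cdot)\mid\tau_{y(a)}^{bm}>1)$ in the limit $a\to0$ with $y=(0,\dots,0,c)$. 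Splicing Steps~1 and~2 and letting $r_n/n\to0$ gives $\{X^{(n)}\mid\tau_x>n\}\Rightarrow X$, with tightness in $C[0,1]$ coming from tightness of $\tilde S$ after time $r_n$ plus the collapse of the initial segment; finally let $\varepsilon\to0$.

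The step I expect to be hardest is the boundary behaviour near $t=0$ in Step~2. After the big jump the $k-1$ low coordinates are separated only by $O(\sqrt{r_n})$, so the post-jump walk effectively starts from a degenerate boundary point of $W$ (its $k-1$ low coordinates coinciding in the limit), and one must control the conditioned walk uniformly over all such almost-degenerate starting configurations and match it to the $a\to0$ entrance law of the conditioned Brownian motion; this is the technical reason for the cutoff $r_n$, and it is where the interchange of the limits $n\to\infty$ and $a\to0$ must be justified. A secondary, more bookkeeping-type difficulty is making the ``exactly one big jump'' dichotomy quantitative enough to produce the \emph{joint} convergence of the jump time, its direction, its rescaled size, and the pre- and post-jump path segments; I would carry this out along the same lines as the proof of Theorem~\ref{T1}, simply retaining the extra path information at each stage of that argument.
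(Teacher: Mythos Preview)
Your proposal is correct and follows essentially the same route as the paper: decompose $\{\tau_x>n\}$ according to the time $l$ and direction of the first big jump (the paper does this via the stopping times $T^\pm$ with threshold $a\sqrt n$, then lets $a\to0$), show the rescaled jump size has limiting density $f$, and then run a conditional invariance principle for the post-jump walk started near the boundary of $W$. The only notable technical difference is that the paper works directly with KMT-coupling and the auxiliary stopping time $\nu_n=\min\{j:x+S(j)\in W_{n,\varepsilon}\}$ from \cite{DW10} rather than your truncation $\tilde S$ plus Donsker, but both devices serve the same purpose --- handling the entrance-from-the-corner issue you flagged as the hardest step --- and the paper's Lemma~\ref{L2} (together with the two-jump estimate (\ref{L2.5}) in its proof) is exactly your ``no second big jump'' bound.
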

It is worth mentioning that the limiting process is not invariant with respect
to the starting position of the random walk. More precisely, the distribution 
of $X(0)$ depends on $x$ through $p(x)$ and $q(x)$.  Clearly this happens beacuse of one large jump 
i the beginning. An analogous result can be 
proven also for random walks with $\mathbf{E}|X|^{k-1}<\infty$, but the limiting 
process will start always at zero.
\subsection{Some remarks on the general case.} 
Although the informal picture behind Theorems \ref{T1} and \ref{T2} is quite simple,
the proofs are very technical. In the case of smaller values of $\alpha$, i.e.
$\alpha<k-2$, one has to overcome even more technical difficulties, which are of 
the combinatorial nature. However, is it clear that our approach works in the case 
$\alpha<k-2$ as well. In this paragraph we describe the behaviour
of ordered random walks for such values of $\alpha$. 

First, in order to stay in $W$ at least up to time $n$,
the random walk $S$ should have $k_\alpha:=k-[\alpha+1]$ big jumps. Then it may happen that
at least two jumps go in the same direction (upwards or downwards). The values of all
these jumps should be ordered. As a result one gets the following relation:
$$
\mathbf{P}(\tau_x>n)\sim\ U(x)n^{-\alpha k_\alpha/2}n^{-(k-k_\alpha)(k-k_\alpha-1)/4}
$$
with some superharmonic function $U$. Second, to construct ordered random walks
we need to add all vectors with $k_\alpha$ infinite coordinates. Finally,
in Theorem~\ref{T2} one has to change the distribution of $X_0$ only: The 
limiting process will start from a random point with $k_\alpha$ non-zero coordinates.

Unfortunately, the case of integer values of $\alpha$ remains unsolved. If,
for example, $\alpha=k-1$, then, the jumps of order $\sqrt{n}$ do not contribute
to $\mathbf{P}(\tau_x>n)$. Therefore, we can not use the method proposed in the
present work.

If $\alpha<2$, then one can describe the asymptotic behaviour of $\tau_x$. If
$k=2$, then $\mathbf{P}(\tau_x>n)$ is of order $n^{-1/2}$. And if $k\geq 3$, then
we expect $k-2$ big jumps. But in this situation all big jumps are of order
$n^{1/\alpha}$. As a result one will obtain
$$
\mathbf{P}(\tau_x>n)\sim\ U(x)n^{-k+3/2}.
$$

Our last remark concerns other Weyl chambers. K\"onig and Schmid \cite{KS10} have shown that the
approach proposed in \cite{DW10} works also in Weyl chambers of types $C$ and $D$. It is easy to see
that, using the method from the present paper, one can prove analogons of Theorems \ref{T1} and \ref{T2} 
for chambers of types $C$ and $D$. Moreover, since big negative jumps lead to exit from these two regions,
the corresponding optimal strategies are even simpler then in the chamber of type $A$.


\section{Finiteness of the superharmonic function}
\begin{proposition}\label{P1}
Under the assumptions of Theorem~\ref{T1},
$$
U(x)=\sum_{l=0}^\infty\mathbf{E}\left[v(x+S(l)),\tau_x>l\right]<\infty.
$$
\end{proposition}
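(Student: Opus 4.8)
The plan is to rewrite every summand exactly by a Doob $h$-transform and then estimate the resulting probability. Since $v=pv_1+qv_2$ and the two pieces are symmetric under interchanging the top and the bottom walk, it is enough to bound $\sum_{l\ge0}\mathbf E[v_1(x+S(l)),\tau_x>l]$. For $n\le l$ the event $\{\tau_x>l\}$ splits as $\{\tau_x^{(1)}>l\}\cap\{\sigma>l\}$, where $\tau_x^{(1)}$ is the first exit of $(x_1,\dots,x_{k-1})+S^{(1)}$ from the $(k-1)$-dimensional Weyl chamber and
$$
\sigma:=\min\{n\ge1:\ x_{k-1}+S_{k-1}(n)\ge x_k+S_k(n)\}.
$$
Now $v_1(x+S(l))=V^{(k-1)}(x_1+S_1(l),\dots,x_{k-1}+S_{k-1}(l))$ and $\tau_x^{(1)}$ are measurable with respect to $S^{(1)}=(S_1,\dots,S_{k-1})$, whereas $S_k$ is independent of $S^{(1)}$; so I would first integrate out $S_k$ in $\mathbf 1\{\sigma>l\}$, producing a functional of the path of $S_{k-1}$, and then apply the $h$-transform attached to $V^{(k-1)}$, which by \cite{DW10} is strictly positive and harmonic for $S^{(1)}$ killed at $\tau_x^{(1)}$ (this is allowed because $\mathbf E|X|^{k-2}<\infty$). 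This gives the identity
$$
\mathbf E\bigl[v_1(x+S(l)),\tau_x>l\bigr]=v_1(x)\,\widehat{\mathbf P}(\sigma>l),
$$
where under $\widehat{\mathbf P}$ the process $\widehat S^{(1)}$ is the $(k-1)$-dimensional walk Doob-conditioned (via $V^{(k-1)}$) to stay in the Weyl chamber forever, $S_k$ is an independent copy of the original walk, and $\sigma$ is the first time $x_k+S_k$ reaches the top coordinate $x_{k-1}+\widehat S_{k-1}$. Summing over $l$, Proposition~\ref{P1} is reduced to $\widehat{\mathbf E}[\sigma]<\infty$, together with the mirror statement for the $v_2$-term (with the bottom coordinate $x_2+\widehat S_2$ and the free walk $S_1$).

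To bound $\widehat{\mathbf P}(\sigma>l)$ I would use that, by the invariance principle of \cite{DW10} and moment estimates for conditioned ordered walks, $\widehat S_{k-1}(m)$ is repelled upwards and stays of exact order $\sqrt m$ throughout $m\le l$; hence on $\{\sigma>l\}$ the free walk $S_k$ must remain above a barrier of order $\sqrt m$ on the whole interval. I would then split according to the largest jump of $S_k$ on $[1,l]$. On the event that no jump exceeds $\varepsilon\sqrt l$, the walk $S_k$ is light-tailed at the relevant scale (the drift lost in truncating the mean-zero increment $X$ at level $\varepsilon\sqrt l$ accumulates to $o(\sqrt l)$ over $l$ steps, since $\alpha>2$, and can be absorbed), and an analogue of Theorem~1 of \cite{DW10} for the $k$-particle system consisting of the original $k-1$ walks and this truncated walk yields a bound $\widehat{\mathbf P}(\sigma>l,\ \text{no big jump})\le C\,l^{-(k-1)/2+o(1)}$; as $\alpha<k-1$ this is $o(l^{-\alpha/2})$. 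On the complementary event I would locate the first jump of $S_k$ exceeding $\varepsilon\sqrt l$, say at time $T$: conditioning on the past, the walk must survive on $[1,T-1]$ with no big jump, which costs the light-tailed factor $T^{-(k-1)/2+o(1)}$, the jump itself costs $\mathbf P(|X|>\varepsilon\sqrt l)\asymp l^{-\alpha/2}$, and the remaining constraint on $[T,l]$ is bounded by $1$. Summing over $T$ converges because $(k-1)/2>1$, and again gives $\le C\,l^{-\alpha/2}$. Altogether $\widehat{\mathbf P}(\sigma>l)\le C\,l^{-\alpha/2}$ up to a slowly varying factor, and $\sum_l l^{-\alpha/2}<\infty$ since $\alpha>k-2\ge2$.

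I expect the real work to be in the light-tailed persistence estimate just quoted: establishing that $S_k$, pitted against the random, upward-repelled barrier $x_{k-1}+\widehat S_{k-1}(\cdot)$, survives up to time $l$ only with probability $l^{-(k-1)/2+o(1)}$. This amounts to re-running the argument of \cite{DW10} for a $k$-dimensional system in which one coordinate is light-tailed (or truncated at level $\varepsilon\sqrt l$) and the remaining $k-1$ coordinates have only $k-2$ finite moments, and — because the truncation level $\varepsilon\sqrt l$ varies with $l$ — doing it with uniform control of the constants; verifying that the ``two big jumps among the middle coordinates'' configurations that one a priori fears are in fact strongly suppressed in this mixed system is the delicate point. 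The recentering after truncation, and the transfer of moment bounds for $\widehat S^{(1)}$ from \cite{DW10}, are routine by comparison.
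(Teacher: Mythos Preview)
Your $h$-transform identity $\mathbf E[v_1(x+S(l)),\tau_x>l]=v_1(x)\,\widehat{\mathbf P}(\sigma>l)$ is correct and is a nice way to package the problem. The argument after it, however, has a genuine gap.

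\textbf{The threshold mismatch in the big-jump step.} When you locate the first jump of $S_k$ exceeding $\varepsilon\sqrt l$ at time $T$, the survival constraint on $[1,T-1]$ is ``no jump of $S_k$ exceeds $\varepsilon\sqrt l$'', \emph{not} ``no jump exceeds $\varepsilon\sqrt T$''. For $T\ll l$ this still allows $S_k$ to take a single jump of size, say, $\tfrac{\varepsilon}{2}\sqrt l$ at time $1$; after such a jump the barrier $\widehat S_{k-1}(m)\asymp\sqrt m\le\sqrt T\ll\sqrt l$ is cleared essentially for free, so
\[
\widehat{\mathbf P}\bigl(\sigma>T-1,\ \text{no jump }>\varepsilon\sqrt l\text{ on }[1,T-1]\bigr)\ \ge\ c\,l^{-\alpha/2},
\]
which exceeds your claimed bound $T^{-(k-1)/2}$ as soon as $T>l^{\alpha/(k-1)}$. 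So the factor $T^{-(k-1)/2}$ cannot be inserted here. What the decomposition actually yields is the \emph{recursive} inequality
\[
\widehat{\mathbf P}(\sigma>l)\ \le\ \widehat{\mathbf P}(\sigma>l,A_l)\ +\ C\,l^{-\alpha/2}\sum_{T\le l}\widehat{\mathbf P}(\sigma>T-1),
\]
and one is forced into a bootstrap: start from the trivial bound $\widehat{\mathbf P}(\sigma>T)\le1$, feed it back, and iterate until the exponent drops below $-1$. This is precisely what the paper does (its inequality (\ref{L1.9}) and the ``Final recursion'' paragraph), so your one-big-jump argument cannot close without that same machinery.

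\textbf{The no-big-jump estimate.} Your claim $\widehat{\mathbf P}(\sigma>l,A_l)\le Cl^{-(k-1)/2+o(1)}$ is the right heuristic (it is the Brownian answer), but ``re-running DW10 for the mixed $k$-particle system'' is not available: Theorem~1 of \cite{DW10} for $k$ walks requires $k-1$ moments of \emph{every} coordinate, while here $S_1,\dots,S_{k-1}$ have only $\alpha<k-1$ moments and you have truncated $S_k$ alone. In particular a single big downward jump of $S_1$ (which you have not excluded) inflates $v_1(x+S(l))$ by a factor $|S_1(l)|^{k-2}$; the paper handles exactly this scenario in its ``big jump downwards'' block, and it produces a contribution in terms of $v_2$, not $v_1$. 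That is why the paper works with $v=pv_1+qv_2$ jointly and why the recursion closes only for the combined quantity. Your symmetric reduction to the $v_1$-piece alone hides this coupling between the two summands; once you try to prove the $A_l$-estimate honestly, it resurfaces.

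In short: the reformulation is sound, but both halves of your dichotomy lead back to a recursive inequality of the paper's type, and the ``light-tailed persistence'' step cannot be read off from \cite{DW10} as stated. The paper's route---splitting on big jumps of both $S_1$ and $S_k$, coupling in the jump-free case, and bootstrapping the resulting recursion for $\mathbf E[v(x+S(n)),\tau_x>n]$---is not being bypassed, only rephrased.
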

We first introduce some notation. For every $\varepsilon>0$ denote
$$
W_{n,\varepsilon}:=\left\{x\in\mathbb{R}^k:\,|x_j-x_i|>n^{1/2-\varepsilon},1\leq i<j\leq k\right\}
$$
and let 
$$
\nu_n:=\min\{j\geq1:x+S(j)\in W_{n,\varepsilon}\}
$$
be the first time the random walk enters this region. 
\begin  {proof}
Fix $\delta>0$. Let $\eta^{\pm}$ be the moments of 'big' jumps upwards and downwards, 
i.e.,
$$
\eta^+=\min\left\{l\geq1: X_k(l)>n^{(1-\delta)/2}\right\}\text{ and }
\eta^-=\min\left\{l\geq1: X_1(l)<-n^{(1-\delta)/2}\right\}.
$$
Let $\eta=\min\{\eta^+,\eta^-\}$ be the first big jump.  

First we note that 
\begin{align} 
\mathbf{E}\left[v(x+S(n)),\tau_x>n\right]&=
\mathbf{E}\left[v(x+S(n)),\tau_x>n,\nu_n\le n^{1-\varepsilon}\right]\nonumber\\
&+\mathbf{E}\left[v(x+S(n)),\tau_x>n,\nu_n>n^{1-\varepsilon}\right]\label{eq:1}. 
\end{align}
To estimate the second term we apply Proposition 4 of \cite{DW10} 
to obtain 
$$
c_*\Delta_1^{(i)}(x)\leq v_i(x)\leq c^*\Delta_1^{(i)}(x),\quad i=1,2,
$$
where
$$
\Delta_1^{(1)}(x):=\prod_{1\leq i<j\leq k-1}(1+|x_j-x_i|)\quad\text{and}\quad
\Delta_1^{(2)}(x):=\prod_{2\leq i<j\leq k}(1+|x_j-x_i|).
$$
Then, according to Lemma 8 in \cite{DW10},
\begin{align}
\label{L1.1}
\nonumber
&\mathbf{E}\left[v(x+S(n)),\tau_x>n,\nu_n> n^{1-\varepsilon}\right]\\
\nonumber
&\leq p\mathbf{E}\left[v_1(x+S(n)),\tau_x^{(1)}>n,\nu_n> n^{1-\varepsilon}\right]+
q\mathbf{E}\left[v_2(x+S(n)),\tau_x^{(2)}>n,\nu_n> n^{1-\varepsilon}\right]\\
\nonumber
&\leq C\left(\Delta_1^{(1)}(x)+\Delta_1^{(2)}(x)\right)\exp\{-C n^\varepsilon\}\\
&\leq Cv(x)\exp\{-C n^\varepsilon\}.
\end{align}
This gives us an estimate for the second term of (\ref{eq:1}). 

The rest of the proof is devoted to estimation of the first summand in 
(\ref{eq:1}). We split this term in three parts: with big jump upwards, big jump downwards and 
no big jumps,  
\begin{align*}
&\hspace{-1cm}\mathbf{E}\left[v(x+S(n)),\tau_x>n,\nu_n\le n^{1-\varepsilon}\right]\\
&\leq\mathbf{E}\left[v(x+S(n)),\tau_x>n,\eta^+\leq\nu_n\leq n^{1-\varepsilon}\right]\\
&\hspace{0.5cm}+\mathbf{E}\left[v(x+S(n)),\tau_x>n,\eta^-\le \nu_n\leq n^{1-\varepsilon},\eta^+>\nu_n\right]\\
&\hspace{0.5cm}+\mathbf{E}\left[v(x+S(n)),\tau_x>n,\nu_n\le n^{1-\varepsilon},\eta>\nu_n\right]\\
&=:E_{up}+E_{down}+E_{no}.
\end{align*}
We construct estimates for each of terms separately and then combine them. 
We apply the resulting estimate recursively several times and prove the claim.  

\vspace{12pt}

{\bf Big jump upwards:}
Using the Markov property, we get
\begin{align*}
E_{up}&=\sum_{l=1}^{n^{1-\varepsilon}}\int_W\mathbf{P}(x+S(l)\in dy,\tau_x>l,\eta^+=l,\nu_n\ge l)\\
&\hspace{2cm}\times\mathbf{E}[v_1(y+S(n-l)),\tau_y>n-l,\nu_n\le n^{1-\varepsilon}-l].
\end{align*}
We apply Proposition 4 of \cite{DW10} to the system of $k-1$ random walks,  
$$
\sup_{n\geq0}\mathbf{E}[v_1(y+S(n)),\tau_y>n]\leq \sup_{n\geq0}\mathbf{E}[v_1(y+S(n)),\tau_y^{(1)}>n]\leq Cv_1(y).
$$
Therefore,
$$
\mathbf{E}[v_1(y+S(n-l)),\tau_y>n-l]\leq Cv_1(y)
$$
and, consequently,
\begin{align*}
E_{up}
\leq C\sum_{l=1}^{n^{1-\varepsilon}}\mathbf{E}\left[v_1(x+S(l)),\tau_x>l,\eta^+=l\right].
\end{align*}
Using the Markov property once again, we have
\begin{align*}
&\mathbf{E}\left[v_1(x+S(l)),\tau_x>l,\eta^+=l\right]\\
&=\int_W \mathbf{P}(x+S(l-1)\in dy,\tau_x>l-1,\eta^+>l-1)\\
&\hspace{2cm}\times\mathbf{E}\left[v_1(y+X(1)),\tau_y>1,X_k(1)>n^{(1-\delta)/2}\right].
\end{align*}
The random variable $X_k$ is independent of 
$X_1,\ldots,X_{k-1}$, 
\begin{align*}
\mathbf{E}&\left[v_1(y+X(1)),\tau_y>1,X_k(1)>n^{(1-\delta)/2}\right]\\
&\le 
\mathbf{E}\left[v_1(y+X(1)),\tau^{(1)}_y>1, X_k(1)>n^{(1-\delta)/2}\right]\\
&=\mathbf{E}\left[v_1(y+X(1)),\tau_y^{(1)}>1\right]\mathbf P\left(X_k(1)>n^{(1-\delta)/2}\right).
\end{align*}
Hence,
\begin{equation*}
\mathbf{E}\left[v_1(x+S(l)),\tau_x>l,\eta^+=l\right]
\le 
pn^{-\alpha(1-\delta)/2} \mathbf{E}\left[v_1(x+S(l-1)),\tau_x>l-1\right].
\end{equation*}
Summing up over $l\leq n^{1-\varepsilon}$, we obtain
\begin{align}
\label{L1.2}
E_{up}\leq Cn^{-\alpha(1-\delta)/2}\sum_{l=1}^{n^{1-\varepsilon}}\mathbf{E}\left[v_1(x+S(l-1)),\tau_x>l-1\right].
\end{align}

\vspace{12pt}

{\bf Big jump downwards:}
We now turn our attention to the case when all jumps of the random walk on the top are bounded by $n^{(1-\delta)/2}$. 
First of all we note that according to one of Fuk-Nagaev inequalities, see Corollary 1.11 in \cite{Nag79},
\begin{equation}\label{L1.2a}
\mathbf{P}\left(\max_{j\leq n^{1-\varepsilon}}\left[S_k(j){\rm 1}\{\eta^+>j\}\right]>n^{1/2-r(\delta)}\right)
\leq \exp\{-C n^{\delta^2/2}\},
\end{equation}
where $r(\delta)=\delta/2-\delta^2/2$.
This yields
\begin{align}
\label{L1.3}
\nonumber
&\mathbf{E}\left[v_1(x+S(n)),\tau_x>n,\max_{j\leq\nu_n}S_k(j)>n^{1/2-r(\delta)},\nu_n\leq n^{1-\varepsilon}, \eta^+>\nu_n\right]\\
\nonumber
&\hspace{1cm} \leq\mathbf{E}\left[v_1(x+S(n)),\tau_x^{(1)}>n,\max_{j\leq n^{1-\varepsilon}}\left[S_k(j){\rm 1}\{\eta^+>j\}\right]>n^{1/2-r(\delta)}\right]\\
\nonumber
&\hspace{1cm} =\mathbf{E}\left[v_1(x+S(n)),\tau_x^{(1)}>n\right]\mathbf{P}\left(\max_{j\leq n^{1-\varepsilon}}\left[S_k(j){\rm 1}\{\eta^+>j\}\right]>n^{1/2-r(\delta)}\right)\\
&\hspace{1cm}\leq v_1(x)\exp\{-C n^{\delta^2/2}\}.
\end{align}
Next we need to analyse the case when the top random walk is 
always less than $n^{1/2}$. Hence,
\begin{align*}
E_{down}&\le v_1(x)\exp\{-C n^{\delta^2/2}\}\\
&+\sum_{l=1}^{n^{1-\varepsilon}}
\mathbf{E}\left[v_1(x+S(n)),\tau_x>n,\eta^-=l\leq\nu_n,\nu_n\leq n^{1-\varepsilon},\max_{j\leq\nu_n}S_k(j)<n^{1/2},\eta^+>\nu_n\right]\\
&=v_1(x)\exp\{-C n^{\delta^2/2}\}+\sum_lE_{down,l}. 
\end{align*}
Clearly in the definition of $E_{down,l}$ the big jum occurs at time  $l$. Also note that 
we have excluded the possibility that the top random walk  
goes up without a big jump. 
Applying the Markov property again, 
\begin{align*}
E_{down,l}&=
\int_W\mathbf{P}\left(x+S(l)\in dy,\tau_x>l,\eta^-=l\right)\nonumber\\
&\times
\mathbf{E}\left[v_1(y+S(n-l)),\tau_y>n-l,\nu_n<n^{1-\varepsilon}-l,\max_{j\leq\nu_n}S_k(j)<n^{1/2}\right]
\nonumber
\\
&=:
\int_W\mathbf{P}\left(x+S(l)\in dy,\tau_x>l,\eta^-=l\right) 
E_{after,l}(y).
\end{align*}
Using the Markov property for the multiplier, 
\begin{align*}
E_{after,l}(y)
=\sum_{r=1}^{n^{1-\varepsilon}-l}\int_{W_{n,\varepsilon}}
&\mathbf{P}\left(y+S(r)\in dz,\tau_y>r=\nu_n, \max_{j\leq r}S_k(j)<n^{1/2}\right)\\
\times &\mathbf{E}\left[v_1(z+S(n-l-r)),\tau_z>n-l-r\right].
\end{align*}
It follows from the martingale property of $v_1$ that
\begin{align*}
&\mathbf{E}\left[v_1(z+S(n-l-r)),\tau_z>n-l-r\right]\\
&\hspace{1cm}\leq \mathbf{E}\left[v_1(z+S(n-l-r)),\tau_z^{(1)}>n-l-r\right]=v_1(z).
\end{align*}
Consequently,
\begin{align}
  \label{eq:2}
E_{after,l}(y)\leq
\mathbf{E}\left[v_1(y+S_{\nu_n}),\tau_y>\nu_n,\max_{j\le \nu_n}S_{k}(j)<n^{1/2}\right].
\end{align}
It follows from Proposition 4 of \cite{DW10} that, uniformly in $z\in W_{n,\varepsilon}$,
\begin{align*}
v_1(z)&\leq C\prod_{1\leq i<j\leq k-1}(z_j-z_i)
\\
&\leq C\frac{(z_k-z_1)^{k-2}}{\prod_{2\leq l\leq k-1}(z_k-z_l)}\prod_{2\leq i<j\leq k}(z_j-z_i)\\
&\leq C(z_k-z_1)^{k-2} n^{-(1/2-\varepsilon)(k-2)}v_2(z).
\end{align*}
Therefore, since  $S_{\nu_n}\in W_{n,\varepsilon}$ and $S_k(\nu_n)<n^{1/2}$ 
it follows from the latter inequality and (\ref{eq:2}) that 
 \begin{align*}
&E_{after,l}(y)\\
&\le Cn^{-(1/2-\varepsilon)(k-2)}\mathbf{E}\left[(n^{1/2}-S_1(\nu_n))^{k-2}v_2(y+S_{\nu_n}),\tau_y>\nu_n,\nu_n\leq n^{1-\varepsilon}-l\right]\\
&\leq Cn^{-(1/2-\varepsilon)(k-2)}\\
&\hspace{0.5cm}\times\mathbf{E}\left[\left(n^{1/2}-y_1-M_1(n^{1-\varepsilon})\right)^{k-2}v_2(y+S(\nu_n)),
\tau_y^{(2)}>\nu_n,\nu_n\leq n^{1-\varepsilon}-l\right],
\end{align*}
where $M_1(n):=\min_{k\leq n}S_1(k)$.
Using now the fact that the sequence  $v_2(y+S_n){\rm 1}\{\tau^{(2)}_y>n\}$ is a martingale, 
we get
\begin{align*}
&E_{after,l}(y)
\\
&\quad\leq Cn^{-(1/2-\varepsilon)(k-2)}
\mathbf{E}\left[\left(n^{1/2}-y_1-M_1(n^{1-\varepsilon})\right)^{k-2}v_2(y+S(n)),\tau_y^{(2)}>n\right]\\
&\quad =Cn^{-(1/2-\varepsilon)(k-2)}
\mathbf{E}\left[\left(n^{1/2}-y_1-M_1(n^{1-\varepsilon})\right)^{k-2}\right]\mathbf{E}\left[v_2(y+S(n)),\tau_y^{(2)}>n\right]\\
&\quad =Cn^{-(1/2-\varepsilon)(k-2)}
\mathbf{E}\left[\left(n^{1/2}-y_1-M_1(n^{1-\varepsilon})\right)^{k-2}\right]v_2(y).
\end{align*}
Applying the Rosenthal inequality, we get finally
\begin{align*}
&
E_{after,l}(y)
\leq Cn^{-(1/2-\varepsilon)(k-2)} v_2(y)\left(|y_1|^{k-2}+n^{(k-2)/2}\right).
\end{align*}
Using this bound, we get
\begin{align*}
&E_{down,l}\\
&\leq C n^{-(1/2-\varepsilon)(k-2)}
\int_W\mathbf{P}\left(x+S(l)\in dy,\tau_x>l,\eta^-=l\right)v_2(y)\left(|y_1|^{k-2}+n^{(k-2)/2}\right)\\
&=C n^{-(1/2-\varepsilon)(k-2)}\mathbf{E}\left[\left(|x+S_1(l)|^{k-2}+n^{(k-2)/2}\right)v_2(x+S(l)),\tau_x>l,\eta^-=l\right]
\end{align*}

We split the latter expectation in two parts. 
First on the event $\{S_1(l-1)\geq-n^{1/2}\}$ we have 
\begin{align}\label{L1.4}
\nonumber
&\mathbf{E}\left[|x+S_1(l)|^{k-2}v_2(x+S(l)),\eta^-=l,\tau_x>l,S_1(l-1)\geq-n^{1/2}\right]\\
\nonumber
&\quad\leq C\mathbf{E}\left[\left((-X_1(l))^{k-2}+n^{(k-2)/2}\right)v_2(x+S(l)),\tau_x>l,\eta^-=l\right]\\
\nonumber
&\quad\leq C\mathbf{E}\left[v_2(x+S(l-1)),\tau_x>l-1\right]\\
\nonumber
&\hspace{2cm}\times
\mathbf{E}\left[\left((-X_1(l))^{k-2}+n^{(k-2)/2}\right),X_1(l)<-n^{(1-\delta)/2}\right]\\
&\quad\leq C n^{(k-2)/2-\alpha(1-\delta)/2}\mathbf{E}\left[v_2(x+S(l-1)),\tau_x>l-1\right].
\end{align}

Second the probability of event $\{S_1(l-1)<-n^{1/2}\}$ 
is negligible due to the Fuk-Nagaev inequality, 
$$
\mathbf{P}\left(S_1(l-1)<-z,\eta^->l-1\right)\leq\exp\left\{-C z/n^{(1-\delta)/2}\right\},\quad z>n^{1/2}.
$$
Therefore, in view of the martingale property of $v_2(y+S_n)1\{\tau^{(2)}_y>n\}$,
\begin{align*}
&\mathbf{E}\left[|x+S_1(l-1)|^{k-2}v_2(x+S(l)),\eta^-=l,\tau_x>l,S_1(l-1)<-n^{1/2}\right]\\
&\quad\leq v_2(x)
\mathbf{E}\left[|x+S_1(l-1)|^{k-2}\eta^->l-1,S_1(l-1)<-n^{1/2}\right]\\
&\quad\leq v_2(x)\exp\left\{-C n^{-\delta/2}\right\}.
\end{align*}
Combining the latter estimate with (\ref{L1.4}) and 
using a bound 
$$
|x+S_1(l)|^{k-2}\leq 2^{k-3}\left(|x+S_1(l-1)|^{k-2}+(-X_1(l))^{k-2}\right),
$$
we get
\begin{align}\label{L1.5}
\nonumber
&\mathbf{E}\left[|x+S_1(l-1)|^{k-2}v_2(x+S(l)),\eta^-=l,\tau_x>l\right]\\
&\leq v_2(x)\exp\left\{-C n^{-\delta/2}\right\}+
C n^{(k-2)/2-\alpha(1-\delta)/2}\mathbf{E}\left[v_2(x+S(l-1)),\tau_x>l-1\right].
\end{align}
>From (\ref{L1.4}) and (\ref{L1.5}) we conclude
\begin{equation*}
E_{down, l}
\leq v_2(x)\exp\left\{-C n^{-\delta/2}\right\}+
C n^{-\alpha/2+\delta_1}\mathbf{E}\left[v_2(x+S(l-1)),\tau_x>l-1\right],
\end{equation*}
where $\delta_1=\varepsilon(k-2)+\alpha\delta/2$.
Summing up over $l$ and taking into account (\ref{L1.3}), we obtain
\begin{align}
\label{L1.6}
\nonumber
{E_{down}}
&\leq v_2(x)\exp\left\{-C n^{-\delta/2}\right\}\\
&+Cn^{-\alpha/2+\delta_1}\sum_{l=1}^{n^{1-\varepsilon}}\mathbf{E}\left[v_2(x+S(l-1)),\tau_x>l-1\right].
\end{align}

\vspace{12pt}

{\bf No big jumps:}
It remains to consider the case with no big jumps before the stopping time $\nu_n$. 
If all the jumps are bounded by $n^{(1-\delta)/2}$, then, as it was shown 
in the proof of Lemma 16 of \cite{DW10},
\begin{align}
\label{L1.7}
\nonumber
&\mathbf{E}\left[v_1(x+S(n)),\tau_x>n,|S(\nu_n)|>n^{1/2-\delta/4},\eta\geq\nu_n, \nu_n\leq n^{1-\varepsilon}\right]\\
&\hspace{2cm}\leq
C\exp\left\{-C n^{-\delta/4}\right\}.
\end{align}

If the random walk starts from $y\in W_{n,\varepsilon}$ with $|y|\leq n^{1/2-\delta/4}$, 
then one can use the standard KMT-coupling to show that
\begin{align}
\label{L1.7a}
\nonumber
\mathbf{E}\left[v_1(y+S(n)),\tau_y>n\right]&\sim \mathbf{E}\left[\Delta^{(1)}(y+S(n)),\tau_y>n\right]\\
\nonumber
&\sim\mathbf{E}\left[\Delta^{(1)}(y+B(n)),\tau^{bm}_y>n\right]\\
&\sim \frac{\Delta^{(1)}(y)} {n^{(k-1)/2}}\mathbf{E}\left[\Delta^{(1)}(B(1))|\tau^{bm}_{y/\sqrt{n}}>1\right].
\end{align}
Moreover, if $\gamma$ is sufficiently small, $\gamma<\delta/8$, then, using the same arguments,
\begin{align}\label{L1.7b}
\nonumber
\mathbf{E}\left[v_1(y+S(n^{1-\gamma})),\tau_y>n^{1-\gamma}\right]&\sim
\mathbf{E}\left[\Delta^{(1)}(y+S(n^{1-\gamma})),\tau_y>n^{1-\gamma}\right]\\
\nonumber
&\sim\mathbf{E}\left[\Delta^{(1)}(y+B(n^{1-\gamma})),\tau^{bm}_y>n^{1-\gamma}\right]\\
&\sim \frac{\Delta^{(1)}(y)} {n^{(1-\gamma)(k-1)/2}}\mathbf{E}\left[\Delta^{(1)}(B(1))|\tau^{bm}_{y/n^{(1-\gamma)/2}}>1\right].
\end{align}
Since $y/n^{(1-\gamma)/2}\to 0$, we have
$$
\mathbf{E}\left[\Delta^{(1)}(B(1))|\tau^{bm}_{y/\sqrt{n}}>1\right]\sim
\mathbf{E}\left[\Delta^{(1)}(B(1))|\tau^{bm}_{y/n^{(1-\gamma)/2}}>1\right].
$$

>From this relation, estimates (\ref{L1.7a}) and (\ref{L1.7b}), and the strong Markov property we infer that
\begin{align*}
&\mathbf{E}\left[v_1(x+S(n)),\tau_x>n,|S(\nu_n)|\leq n^{1/2-\delta/4},\eta\geq\nu_n, \nu_n\leq n^{1-\varepsilon}\right]\\
&\sim n^{-\gamma(k-1)/2}\mathbf{E}\left[v_1(x+S(n^{1-\gamma})),\tau_x>n^{1-\gamma},|S(\nu_n)|\leq n^{1/2-\delta/4},\eta\geq\nu_n, \nu_n\leq n^{1-\varepsilon}\right].
\end{align*}
Hence
\begin{align}\label{L1.8}
\nonumber
\mathbf{E}\left[v_1(x+S(n)),\tau_x>n,|S(\nu_n)|\leq n^{1/2-\delta/4},\eta\geq\nu_n, \nu_n\leq n^{1-\varepsilon}\right]\\
\leq C n^{-\gamma(k-1)/2}\mathbf{E}\left[v_1(x+S(n^{1-\gamma})),\tau_x>n^{1-\gamma} \right].
\end{align}

\vspace{12pt}

{\bf Final recursion:}
Putting (\ref{L1.1}), (\ref{L1.2}), (\ref{L1.6})--(\ref{L1.8}) together, we obtain
\begin{align*}
\mathbf{E}\left[v_1(x+S(n)),\tau_x>n\right]\leq
Cn^{-\alpha/2+\delta_1}\sum_{l=1}^{n^{1-\varepsilon}}\mathbf{E}\left[v(x+S(l-1)),\tau_x>l-1\right]\\
+C n^{-\gamma(k-1)/2}\mathbf{E}\left[v_1(x+S(n^{1-\gamma})),\tau_x>n^{1-\gamma} \right]+C\exp\{-Cn^{\delta/4}\}.
\end{align*}
Because of the symmetry, an analogous bound holds for $\mathbf{E}\left[v_2(x+S(n)),\tau_x>n\right]$.
Consequently,
\begin{align*}
\mathbf{E}\left[v(x+S(n)),\tau_x>n\right]\leq Cn^{-\alpha/2+\delta_1}
\sum_{l=1}^{n^{1-\varepsilon}}\mathbf{E}\left[v(x+S(l-1)),\tau_x>l-1\right]\\
+C n^{-\gamma(k-1)/2}\mathbf{E}\left[v(x+S(n^{1-\gamma})),\tau_x>n^{1-\gamma} \right]+C\exp\{-Cn^{\delta/4}\}.
\end{align*}
Iterating this bound $N$ times and recalling that $\alpha<k-1$, we obtain
\begin{align*}
\mathbf{E}\left[v(x+S(n)),\tau_x>n\right]\leq Cn^{-\alpha/2+\delta_1}\sum_{l=1}^{n}\mathbf{E}\left[v(x+S(l-1)),\tau_x>l-1\right]\\
+C n^{-(1-(1-\gamma)^N)(k-1)/2}v(x)+C\exp\{-Cn^{(1-\gamma)^{N-1}\delta/4}\}.
\end{align*}
If $N$ is such that $(1-(1-\gamma)^N)(k-1)/>\alpha$, then
\begin{align}\label{L1.9}
\nonumber
&\mathbf{E}\left[v(x+S(n)),\tau_x>n\right]\\
&\hspace{1cm}\leq Cn^{-\alpha/2+\delta_1}\sum_{l=1}^{n}\mathbf{E}\left[v(x+S(l-1)),\tau_x>l-1\right]
+C(x) n^{-\alpha/2}.
\end{align}
We know that $\mathbf{E}\left[v(x+S(l-1)),\tau_x>l-1\right]\leq v(x)$. Entering with this into (\ref{L1.9}), we get
\begin{align}\label{L1.10}
\mathbf{E}\left[v(x+S(n)),\tau_x>n\right]\leq C(x)n^{1-\alpha/2+\delta_1}.
\end{align}
If $\alpha>4$, then making $\delta_1$ sufficiently small, we see that $\mathbf{E}\left[v(x+S(n)),\tau_x>n\right]$ is summable.
If $\alpha\leq4$, then applying (\ref{L1.10}) to every expectation on the right hand side of (\ref{L1.9}), we get
$$
\mathbf{E}\left[v(x+S(n)),\tau_x>n\right]\leq C(x)n^{2-\alpha+2\delta_1}.
$$
We are done if $\alpha>3$. If it is not the case, then we enter with the new bound into (\ref{L1.9}), and so on.
The $N$-th iteration will give the bound of order $n^{N(1-\alpha/2+\delta_1)}$. If $N(1-\alpha/2+\delta_1)<-1$, then
we have the desired summability.
\end{proof}

\section{Proof of Theorem \ref{T1}}
We start by estimating the tail of $\tau_x$ for paths without big jumps.
\begin{lemma}\label{L2}
Let $A_n(a)$ denote the event $\{X_1(l)\geq-an^{1/2},X_k(l)\leq an^{1/2}\text{ for all }l\leq n\}$.
Then
$$
\mathbf{P}\left(\tau_x>n,A_n(a)\right)\leq Ca^{k-1-\alpha} n^{-\alpha/2-(k-1)(k-2)/4}.
$$
\end{lemma}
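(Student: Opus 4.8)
The plan is to sort paths in $A_n(a)$ by the size of the largest \emph{dangerous} jump --- meaning the largest up-jump $X_k(l)$ of the top walk or down-jump $X_1(l)$ of the bottom walk, since only these help the extreme coordinates separate from the bulk --- and to reduce each resulting piece to the $(k-1)$-dimensional survival asymptotics of \cite{DW10}. Fix a small constant $\varepsilon_0$ and write
$$
\{\tau_x>n\}\cap A_n(a)\subseteq\bigl(\{\tau_x>n\}\cap B_n\bigr)\cup\bigcup_{j\,:\;2^{-j}a\ge\varepsilon_0}\bigl(\{\tau_x>n\}\cap D_j\bigr),
$$
where on $B_n$ no dangerous jump exceeds $(\varepsilon_0\wedge a)n^{1/2}$ and on $D_j$ the largest dangerous jump lies in $(2^{-j-1}an^{1/2},2^{-j}an^{1/2}]$. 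One may assume $an^{1/2}\ge1$, since otherwise $\mathbf{P}(X>an^{1/2})$ is bounded below and $\mathbf{P}(A_n(a))$ is exponentially small; the union over $j$ is then finite, and empty when $a\le\varepsilon_0$.

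On $B_n$ the extreme walks cannot pull away from the bulk, so the $k$-dimensional system behaves diffusively; combining the independence of $X_k$ (resp.\ $X_1$) from the remaining coordinates with the $(k-1)$-walk bounds (Proposition~4 and Theorem~1 of \cite{DW10}), the KMT/Brownian-comparison scheme used in the proof of Lemma~16 of \cite{DW10} and around (\ref{L1.7a}), and Fuk--Nagaev estimates to discard paths on which the top or bottom walk strays too far without a dangerous jump, one bounds this contribution by $Cn^{-k(k-1)/4}$. Since $k(k-1)/4=(k-1)/2+(k-1)(k-2)/4$ and $\alpha<k-1$, and since $an^{1/2}\ge1$ gives $n^{-k(k-1)/4}\le a^{k-1-\alpha}n^{-\alpha/2-(k-1)(k-2)/4}$, this piece is of the required order. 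For fixed $j$ put $b:=2^{-j}a$ and apply the strong Markov property at the time $m\le n$ of a dangerous jump of size in $(bn^{1/2}/2,bn^{1/2}]$, following the single-big-jump computations of Proposition~\ref{P1}: the jump costs $\le C(bn^{1/2})^{-\alpha}=Cb^{-\alpha}n^{-\alpha/2}$ and is independent of the other increments; the survival of the whole system on $[0,m-1]$ without a dangerous jump contributes a weight summable in $m$ provided $k\ge4$; and after the jump one has $k-1$ ordered walks in a window of bounded scale together with one coordinate frozen at distance $\asymp bn^{1/2}$, whose joint survival over the remaining time is controlled, via the $(k-1)$-walk result and the harmonic function $\Delta_1^{(1)}$ (or $\Delta_1^{(2)}$) evaluated on such a configuration, by $Cb^{k-1}n^{-(k-1)(k-2)/4}$. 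Summing over $m$ and then over $j$ --- the latter sum being $\sum_j(2^{-j}a)^{k-1-\alpha}\le Ca^{k-1-\alpha}$ because $\alpha<k-1$ --- gives the stated bound; the down-jump case is symmetric.

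The step I expect to be the main obstacle is the ``after the jump'' estimate, i.e.\ the uniform-in-$b$ bound, decaying like $b^{k-1}$ as $b\to0$, for the probability that the $k$-dimensional walk stays in $W$ for time $n$ given that one extreme coordinate already sits at distance $\asymp bn^{1/2}$ from the other $k-1$. Since $X_2,\dots,X_{k-1}$ have only $k-2$ moments one cannot read this off a $k$-walk harmonic function; instead one must run the $k-2$-moment fluctuation estimates of \cite{DW10} while tracking the boundary blow-up of the $(k-1)$-walk harmonic function, and glue the pre- and post-jump parts so that the sum over the jump time $m$ converges --- which is exactly where $k\ge4$ enters.
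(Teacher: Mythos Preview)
Your dyadic treatment of the pieces $D_j$ is sound and corresponds to the paper's single-jump estimates (\ref{L2.2})--(\ref{L2.3}); summing the geometric series in $j$ does produce the factor $a^{k-1-\alpha}$. The ``after the jump'' bound you flag as the main obstacle is exactly Lemma~\ref{L4}, in particular (\ref{L4.2}), whose proof uses only $(k-1)$-walk tools from \cite{DW10}, so no circularity arises there.

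The genuine gap is the $B_n$ piece. Your claim $\mathbf{P}(\tau_x>n,B_n)\le Cn^{-k(k-1)/4}$ is false, and no coupling argument will repair it. Note that $\{\tau_x>n\}\cap B_n$ is precisely $\{\tau_x>n\}\cap A_n(\varepsilon_0\wedge a)$, so if the lemma is sharp (and the proof of Theorem~\ref{T1} shows it is, up to constants) this piece is of order $(\varepsilon_0\wedge a)^{k-1-\alpha}n^{-\alpha/2-(k-1)(k-2)/4}$, which is \emph{strictly larger} than $n^{-k(k-1)/4}$ because $\alpha<k-1$. Concretely, even on $B_n$ a single dangerous jump of size close to $\varepsilon_0 n^{1/2}$ is permitted, and that strategy alone already dominates the diffusive $k$-walk rate. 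The coupling estimates around (\ref{L1.7a}) and Lemma~16 of \cite{DW10} are for the $(k-1)$-walk quantities $v_1,\tau^{(1)}$; for the full $\mathbf{P}(\tau_x>n)$ the hypothesis $\mathbf E|X|^{k-1}<\infty$ of \cite{DW10} is missing, and truncating at level $\varepsilon_0 n^{1/2}$ gives a walk whose $(k-1)$-st moment grows like $n^{(k-1-\alpha)/2}$, so the implied constants blow up with $n$.

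The paper does not attempt to close the no-jump case directly. It places the big-jump threshold at $n^{(1-\delta)/2}$ rather than at a fixed multiple of $n^{1/2}$, and for the no-jump case applies coupling only \emph{after} the stopping time $\nu_n$ (when all gaps exceed $n^{1/2-\varepsilon}$) to obtain the self-referential bound (\ref{L2.1}),
\[
\mathbf{P}\bigl(\tau_x>n,\;A_n(a),\;\eta>\nu_n,\;\nu_n\le n^{1-\varepsilon}\bigr)
\;\le\; n^{-\gamma k(k-1)/4}\,\mathbf{P}\bigl(\tau_x>n^{1-\gamma},\;A_{n^{1-\gamma}}(a)\bigr).
\]
Together with the single-jump bounds this gives a closed recursion in $n$; iterating it finitely many times pushes the no-jump contribution below the target $Ca^{k-1-\alpha}n^{-\alpha/2-(k-1)(k-2)/4}$. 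Your scheme needs this recursion (or an equivalent bootstrap on the threshold) --- a fixed $\varepsilon_0$ cannot serve as a base case.
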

\begin{proof}
First, if $\eta>\nu_n$, then, repeating coupling arguments from the proof of Proposition \ref{P1}, we obtain
\begin{align}
\label{L2.1}
\nonumber
&\mathbf{P}\left(\tau_x>n,A_n(a),\eta>\nu_n,\nu_n\leq n^{1-\varepsilon}\right)\\
\nonumber
&\quad\sim n^{-\gamma k(k-1)/4}\mathbf{P}\left(\tau_x>n^{1-\gamma},A_{n^{1-\gamma}}(a),\eta>\nu_n,\nu_n\leq n^{1-\varepsilon}\right)\\
&\quad\leq n^{-\gamma k(k-1)/4}\mathbf{P}\left(\tau_x>n^{1-\gamma},A_{n^{1-\gamma}}(a)\right).
\end{align}

We next assume that the random walk on the bottom jumps before $\nu_n$ but the random walk on the top do not jump, 
i.e. $\eta^-\leq \nu_n$ and $\eta^+> \nu_n$. It follows from (\ref{L1.2a}) that
\begin{align*}
&\mathbf{P}\left(\tau_x>n, A_n(a),\eta^-\leq\nu_n<\eta^+,\nu_n\leq n^{1-\varepsilon}\right)\\
&\quad\leq \mathbf{P}\left(\tau_x>n, A_n(a),\eta^-\leq\nu_n\leq n^{1-\varepsilon}, S_k(\nu_n)\leq n^{1/2-r(\delta)}\right)
+\exp\{-Cn^{\delta^2/2}\}.
\end{align*}
Applying now estimate (33) from \cite{DW10}, we get
\begin{align*}
&\mathbf{P}\left(\tau_x>n, A_n(a),\eta^-\leq\nu_n\leq n^{1-\varepsilon}, S_k(\nu_n)\leq n^{1/2-r(\delta)}\right)\\
&\leq \frac{C}{n^{k(k-1)/4}}\mathbf{E}
\left[\Delta(x+S(\nu_n)),\tau_x>\nu_n, A_{\nu_n}(a),\eta^-\leq\nu_n\leq n^{1-\varepsilon}, S_k(\nu_n)\leq n^{1/2-r(\delta)}\right].
\end{align*}
On $W_{n,\varepsilon}$ holds
$$
\Delta(x+S(\nu_n))\leq \left(x_k-x_1+S_k(\nu_n)-S_1(\nu_n)\right)^{k-1}v_1(x+S(\nu_n)).
$$
Consequently,
\begin{align*}
&\mathbf{E}
\left[\Delta(x+S(\nu_n)),\tau_x>\nu_n, A_{\nu_n}(a),\eta^-\leq\nu_n\leq n^{1-\varepsilon}, S_k(\nu_n)\leq n^{1/2-r(\delta)}\right]\\
&\leq \mathbf{E}\left[\left(n^{1/2-r(\delta)}-S_1(\nu_n)\right)^{k-1}v_2(x+S(\nu_n)),
\tau_x>\nu_n, A_{\nu_n}(a),\eta^-\leq\nu_n\leq n^{1-\varepsilon}\right].
\end{align*}
Repeating arguments from the proof of Proposition \ref{P1}, we get
\begin{align*}
&\mathbf{E}
\left[\Delta(x+S(\nu_n)),\tau_x>\nu_n, A_{\nu_n}(a),\eta^-\leq\nu_n\leq n^{1-\varepsilon}, S_k(\nu_n)\leq n^{1/2-r(\delta)}\right]\\
&\leq \sum_{l=1}^{n^{1-\varepsilon}}\mathbf{E}\left[v_2(x+S(l-1)),\tau_x>l-1\right]
\mathbf{E}\left[ X^{k-1}, n^{(1-\delta)/2}\leq X\leq an^{1/2}\right]\\
&\leq a^{k-1-\alpha}n^{(k-1-\alpha)/2}\sum_{l=1}^{\infty}\mathbf{E}\left[v_2(x+S(l-1)),\tau_x>l-1\right]
\end{align*}
As a result we have
\begin{align}
\label{L2.2}
\nonumber
&\mathbf{P}\left(\tau_x>n, A_n(a),\eta^-\leq\nu_n<\eta^+,\nu_n\leq n^{1-\varepsilon}\right)\\
&\quad\quad\leq \frac{Ca^{k-1-\alpha}}{n^{\alpha/2+(k-1)(k-2)/4}}\sum_{l=1}^{\infty}\mathbf{E}\left[v_2(x+S(l-1)),\tau_x>l-1\right].
\end{align}
Analogously,
\begin{align}
\label{L2.3}
\nonumber
&\mathbf{P}\left(\tau_x>n, A_n(a),\eta^+\leq\nu_n<\eta^-,\nu_n\leq n^{1-\varepsilon}\right)\\
&\quad\quad\leq \frac{Ca^{k-1-\alpha}}{n^{\alpha/2+(k-1)(k-2)/4}}\sum_{l=1}^{\infty}\mathbf{E}\left[v_1(x+S(l-1)),\tau_x>l-1\right].
\end{align}

Therefore, it remains to consider the case when $\eta^+\leq\nu_n$ and $\eta^-\leq\nu_n$. Because of the symmetry we may assume that
$\eta^+\leq\eta^-$. Then
\begin{align*}
&\mathbf{P}\left(\tau_x>n,\eta^+\leq\eta^-\leq\nu_n\leq n^{1-\varepsilon}\right)\\
&\quad\leq\sum_{l=1}^{n^{1-\varepsilon}}\mathbf{P}\left(\tau_x>n,\eta^+=\eta^-=l\right)+
\sum_{l=1}^{n^{1-\varepsilon}}\sum_{j=l+1}^{n^{1-\varepsilon}}\mathbf{P}\left(\tau_x>n,\eta^+=l,\eta^-=j\right).
\end{align*}
First we note
$$
\mathbf{P}\left(\tau_x>n,\eta^+=\eta^-=l\right)\leq C\int_W\mathbf{P}(x+S(l-1)\in dy,\tau_x>l-1)n^{-\alpha+\delta}
\frac{\tilde{v}(y)}{n^{(k-2)(k-3)/4}},
$$
where $\tilde{v}$ is the invariant function for random walks $(S_2,\ldots,S_{k-1})$. Using now the bound
\begin{align}\label{L2.4}
\mathbf{E}\left[\tilde{v}(x+S(l)),\tau_x^{(1)}>l\right]\leq Cv_1(x)l^{-(k-2)/2},\quad l\geq1,
\end{align}
we  obtain
\begin{align*}
\sum_{l=1}^{n^{1-\varepsilon}}\mathbf{P}\left(\tau_x>n,\eta^+=\eta^-=l\right)
&\leq\frac{C}{n^{\alpha-\delta+(k-2)(k-3)/4}}\sum_{l=0}^{n}\mathbf{E}\left[\tilde{v}(x+S(l)),\tau_x^{(1)}>l\right]\\
&\leq\frac{Cv_1(x)}{n^{\alpha-\delta+(k-2)(k-3)/4}}\sum_{l=1}^{n}l^{-(k-2)/2}\\
&\leq Cv_1(x)\frac{\log n}{n^{\alpha-\delta+(k-2)(k-3)/4}}\\
&=o\left(n^{-\alpha/2-(k-1)(k-2)/4}\right).
\end{align*}
Furthermore, applying (\ref{L2.4}) once again, we get
\begin{align*}
&\mathbf{P}\left(\tau_x>n,\eta^+=l,\eta^-=j\right)\\
&\quad\leq C\int_W\mathbf{P}(x+S(j-1)\in dy,\tau_x>j-1,\eta^+=l)n^{-\alpha/2+\delta/2}
\frac{\tilde{v}(y)}{n^{(k-2)(k-3)/4}}\\
&\quad\leq C\int_W\mathbf{P}(x+S(l-1)\in dy,\tau_x>l-1)n^{-\alpha+\delta}\frac{v_1(y)}{n^{(k-2)(k-3)/4}}\frac{1}{(j-l)^{(k-2)/2}}.
\end{align*}
This implies that
\begin{align*}
&\sum_{l=1}^{n^{1-\varepsilon}}\sum_{j=l+1}^{n^{1-\varepsilon}}\mathbf{P}\left(\tau_x>n,\eta^+=l,\eta^-=j\right)\\
&\quad\leq\frac{C\log n}{n^{\alpha-\delta+(k-2)(k-3)/4}}\sum_{l=0}^\infty\mathbf{E}\left[v_1(x+S(l)),\tau_x>l\right]\\
&\quad=o\left(n^{-\alpha/2-(k-1)(k-2)/4}\right).
\end{align*}
As a result we have the bound
\begin{align}
\label{L2.5}
\mathbf{P}\left(\tau_x>n,\eta^+\leq\eta^-\leq\nu_n\leq n^{1-\varepsilon}\right)=
o\left(n^{-\alpha/2-(k-1)(k-2)/4}\right).
\end{align}
Combining (\ref{L2.1}) -- (\ref{L2.5}), we arrive at the inequality
\begin{align*}
\mathbf{P}\left(\tau_x>n,A_n(a)\right)
\leq n^{-\gamma k(k-1)/4}\mathbf{P}\left(\tau_x>n^{1-\gamma},A_{n^{1-\gamma}}(a)\right)
+\frac{Ca^{k-1-\alpha}}{n^{\alpha/2+(k-1)(k-2)/4}}.
\end{align*}
Iterating $N$ times we get
\begin{align*}
\mathbf{P}\left(\tau_x>n,A_n(a)\right)
\leq n^{-(1-(1-\gamma)^{N}) k(k-1)/4}\mathbf{P}\left(\tau_x>n^{(1-\gamma)^N},A_{n^{(1-\gamma)^N}}(a)\right)\\
+\frac{Ca^{k-1-\alpha}}{n^{\alpha/2+(k-1)(k-2)/4}}.
\end{align*}
Choosing $N$ sufficiently large, we arrive at the desired inequality.
\end{proof}
\begin{lemma}
\label{L3}
If $S$ is as in Theorem 1 of \cite{DW10}, then there exists a constant $C$ such that
$$
\mathbf{P}(\tau_x>n)\leq \frac{CV(x)}{n^{k(k-1)/4}},\quad x\in W.
$$
\end{lemma}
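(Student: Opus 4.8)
The plan is to prove the inequality by induction on the number $k$ of random walks, separating the two qualitatively different ways $\{\tau_x>n\}$ can occur: either the $k$ coordinates stay at the diffusive scale $\sqrt n$ relative to one another, or the starting configuration already contains one gap much wider than $\sqrt n$, in which case the ordered system essentially splits into two independent smaller ordered systems. I would use two facts about $V$ from \cite{DW10}. First, Proposition~4 of \cite{DW10}, now applied to all $k$ coordinates, gives $c_*\Delta_1(x)\leq V(x)\leq c^*\Delta_1(x)$ with $\Delta_1(x):=\prod_{1\leq i<j\leq k}(1+|x_j-x_i|)$; in particular $V(x)\geq c_*\Delta(x)$ for $\Delta(x):=\prod_{i<j}(x_j-x_i)$, the function $V$ is bounded away from $0$, and $V$ is invariant under the shift $x\mapsto x+t(1,1,\ldots,1)$ (which also leaves $\tau_x$ unchanged). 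Second, harmonicity makes $V(x+S(n))\mathbf{1}\{\tau_x>n\}$ a nonnegative martingale of constant expectation $V(x)$, so that Markov's inequality together with the homogeneity of $\Delta$ already gives, for \emph{every} $x$ and $n$,
$$
\mathbf{P}\bigl(\tau_x>n,\ \Delta(x+S(n))\geq\delta n^{k(k-1)/4}\bigr)\leq\frac{V(x)}{c_*\delta\, n^{k(k-1)/4}};
$$
this is the bound on the ``generic'' part of path space, and the real task is to control the remainder, on which $x+S(n)$ has an atypically small value of $\Delta$.

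For the inductive step I would dichotomise according to the largest gap $g(x):=\max_{1\leq i<k}(x_{i+1}-x_i)$. If $g(x)<\sqrt n$, then, after subtracting the centre of mass (which changes neither $\tau_x$ nor $V(x)$), the starting point lies in a ball of radius $O(\sqrt n)$, and I would run the Koml\'os--Major--Tusn\'ady strong approximation of $S$ by Brownian motion as in \cite{DW10} --- this is the one place where $\mathbf{E}|X|^{k-1}<\infty$ is used, namely to make the approximation error $o(\sqrt n)$ on the relevant time horizon. The coupling reduces the estimate to its Brownian analogue, and for Brownian motion one has the classical reflection/determinantal bound $\mathbf{P}(\tau^{bm}_x>t)\leq C\,\Delta(x)\,t^{-k(k-1)/4}$, uniformly in $x\in W$; together with $\Delta(x)\leq C V(x)$ this closes the case.

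If instead $g(x)\geq\sqrt n$, say the maximal gap is between coordinates $j^*$ and $j^*+1$, I would use the elementary fact that $\{\tau_x>n\}$ forces the orderings of $(x_1,\ldots,x_{j^*})$ and of $(x_{j^*+1},\ldots,x_k)$ to persist up to time $n$, and that these events are driven by disjoint subsets of the independent walks $S_1,\ldots,S_k$, hence are independent:
$$
\mathbf{P}(\tau_x>n)\ \leq\ \mathbf{P}\bigl(\tau^A_{x_A}>n\bigr)\,\mathbf{P}\bigl(\tau^B_{x_B}>n\bigr),\qquad A=\{1,\ldots,j^*\},\quad B=\{j^*+1,\ldots,k\}.
$$
Each sub-system has fewer than $k$ walks, so the induction hypothesis applies; one then reassembles using the factorisation of $\Delta_1$ over $A$ and $B$ and the bounds $x_j-x_i\geq x_{j^*+1}-x_{j^*}\geq\sqrt n$ for $i\in A$, $j\in B$,
$$
V(x)\ \geq\ c_*\,\Delta_1^{(A)}(x_A)\,\Delta_1^{(B)}(x_B)\,(x_{j^*+1}-x_{j^*})^{|A||B|}\ \geq\ c\,V^{(|A|)}(x_A)\,V^{(|B|)}(x_B)\,n^{|A||B|/2},
$$
and the exponents match because $\tfrac14|A|(|A|-1)+\tfrac14|B|(|B|-1)+\tfrac12|A||B|=\tfrac14k(k-1)$. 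The base case $k=2$ is the classical one-dimensional estimate $\mathbf{P}(\tau_x>n)\leq C V(x)n^{-1/2}$, again made uniform by the same Gaussian comparison.

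The step I expect to be the genuine obstacle is the uniform Brownian comparison in the case $g(x)<\sqrt n$: one must show, as in \cite{DW10}, that on the coupling event the exit times of $S$ and of Brownian motion are close enough even when some gaps of $x$ are far below $\sqrt n$ --- indeed below the coupling error --- and already for the smallest admissible case $k=3$ the naive rate $n^{1/2}$ of the strong approximation is insufficient, so one needs the dyadic/bootstrap version of the coupling used in \cite{DW10}. Everything else --- the martingale and Markov estimate, the decoupling inequality, and the bookkeeping of exponents --- is routine.
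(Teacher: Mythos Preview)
Your induction on $k$ with the dichotomy on the maximal gap $g(x)$ is a genuinely different route from the paper's, and your splitting argument in the case $g(x)\geq\sqrt n$ is correct and clean. But your Case~1 ($g(x)<\sqrt n$) has exactly the gap you yourself flag and do not close: when some initial gaps are $O(1)$, the KMT error --- of order $n^{1/(k-1)}$ under $\mathbf{E}|X|^{k-1}<\infty$ --- swamps them, and perturbing the Brownian starting point by the coupling error inflates $\Delta$ from $O(1)$ to a power of $n$, so you cannot recover a bound proportional to $V(x)$. Your reference to a ``dyadic/bootstrap version of the coupling used in \cite{DW10}'' is a gesture, not an argument; the device that actually handles this is the stopping time $\nu_n$ already introduced in the paper, and once one uses it the whole induction becomes unnecessary.

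The paper's proof is short and sidesteps the small-gap difficulty entirely. With $\nu_n=\min\{j:x+S(j)\in W_{n,\varepsilon}\}$ one has $\mathbf{P}(\nu_n>n^{1-\varepsilon})\leq e^{-Cn^\varepsilon}$, and inequality~(33) of \cite{DW10} gives $\mathbf{P}(\tau_y>m)\leq C\Delta(y)m^{-k(k-1)/4}$ for $y\in W_{n,\varepsilon}$; this is where the coupling sits, but now all gaps already exceed $n^{1/2-\varepsilon}$, so your obstruction is absent. Since $V\sim\Delta$ on $W_{n,\varepsilon}$ (Proposition~4 of \cite{DW10}), the strong Markov property at $\nu_n$ yields
\[
\mathbf{P}(\tau_x>n,\nu_n\leq n^{1-\varepsilon})\leq \frac{C}{n^{k(k-1)/4}}\,\mathbf{E}\bigl[V(x+S(\nu_n)),\,\tau_x>\nu_n,\,\nu_n\leq n^{1-\varepsilon}\bigr],
\]
and optional stopping for the martingale $V(x+S(n)){\rm 1}\{\tau_x>n\}$ at the bounded time $\nu_n\wedge n^{1-\varepsilon}$ bounds the expectation by $V(x)$. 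The event $\{\nu_n>n^{1-\varepsilon}\}$ is exponentially small and $\inf_W V>0$, so it is absorbed into the constant. The idea you are missing is that one should let the walk itself eliminate the small gaps before attempting any Brownian comparison, and harmonicity of $V$ guarantees, via optional stopping, that nothing is lost in the waiting.
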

\begin{proof}
It follows from Proposition 4 of \cite{DW10} that $V(x)\sim\Delta(x)$ uniformly in $x\in W_{n,\varepsilon}$.
This and inequality (33) from \cite{DW10} imply that
$$
\mathbf{P}(\tau_x>n,\nu_n\leq n^{1-\varepsilon})\leq
\frac{C}{n^{k(k-1)/4}}\mathbf{E}\left[V(x+S(\nu_n)),\tau_x>\nu_n,\nu_n\leq n^{1-\varepsilon}\right].
$$
Recalling that the sequence $V(x+S(n)){\rm 1}\{\tau_x>n\}$ is a martingale, we conclude that
\begin{align*}
&\mathbf{E}\left[V(x+S(\nu_n)),\tau_x>\nu_n,\nu_n\leq n^{1-\varepsilon}\right]\\
&\qquad\leq
\mathbf{E}\left[V(x+S(\nu_n\wedge n^{1-\varepsilon})),\tau_x>\nu_n\wedge n^{1-\varepsilon}\right]=V(x).
\end{align*}
To complete the proof it remains to note that $\mathbf{P}(\nu_n>n^{1-\varepsilon})\leq e^{-Cn^\varepsilon}$
and that $\inf_{x\in W}V(x)>0$.
\end{proof}

\begin{lemma}\label{L4}
If $x_k=r\sqrt{n}$ and $x_1,\ldots,x_{k-1}$ are fixed, then there exists a function $\psi$ such that
\begin{equation}
\label{L4.1}
\mathbf{P}(\tau_x>n)\sim \psi(r)\frac{v_1(x)}{n^{(k-1)(k-2)/4}}.
\end{equation}
Moreover, 
\begin{equation}
\label{L4.2}
\psi(a)\leq Ca^{k-1},\quad a>0.
\end{equation}
\end{lemma}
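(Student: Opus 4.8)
The plan is to isolate the coordinate $x_k=r\sqrt n$, which sits on the characteristic scale, from the remaining coordinates $x_1,\dots,x_{k-1}$, which stay bounded, and to show that the event $\{\tau_x>n\}$ essentially decouples into "the bottom $(k-1)$ walks stay ordered" times "the top walk stays above the $(k-1)$-st one". First I would write $\tau_x=\min\{\tau_x^{(1)},\sigma_x\}$, where $\tau_x^{(1)}$ is the exit time of $(S_1,\dots,S_{k-1})$ from its $(k-1)$-dimensional Weyl chamber and $\sigma_x:=\min\{n\ge1: x_k+S_k(n)\le x_{k-1}+S_{k-1}(n)\}$ is the first time the top walk is caught by the $(k-1)$-st one. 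Since $x_{k-1}$ is fixed and $x_k=r\sqrt n$, on the event $\{\tau_x^{(1)}>n\}$ the coordinate $x_{k-1}+S_{k-1}(n)$ is of order $\sqrt n$, so $\sigma_x$ behaves like the first time a centered random walk of variance $2\sigma^2$ (namely $S_k-S_{k-1}$, shifted) goes below a level at height $\sim r\sqrt n$ — a classical one-dimensional problem whose probability of survival up to time $n$ converges, after the diffusive rescaling, to the probability that a Brownian motion started at $r\sqrt 2$ (suitably normalized) stays positive on $[0,1]$; this is where the function $\psi(r)$ comes from.

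The key steps, in order: (i) Restrict to the event that no coordinate among $1,\dots,k-1$ makes a big jump and that the walk enters $W_{n,\varepsilon}$ quickly — the complementary contributions are $O(e^{-Cn^{\varepsilon}})$ or lower order by the Fuk–Nagaev bounds already used in Proposition \ref{P1} and Lemma \ref{L2}, together with the regular variation \eqref{RegVar} controlling the top coordinate. (ii) On the good event, apply the KMT coupling exactly as in \eqref{L1.7a}: couple $(S_1,\dots,S_{k-1})$ with a $(k-1)$-dimensional Brownian motion, so that $\mathbf P(\tau_x^{(1)}>n)\sim V^{(k-1)}(x_1,\dots,x_{k-1})\,c\,n^{-(k-1)(k-2)/4}$, and simultaneously couple $S_k-S_{k-1}$ with an independent (in the limit) one-dimensional Brownian motion; this uses $\mathbf E|X|^{k-2}<\infty$, which follows from $\alpha>k-2$. (iii) Combine: conditionally on the $(k-1)$ walks staying ordered and ending at a Brownian-scale configuration, the probability that the top walk additionally stays above is asymptotically $\mathbf P(\beta(t)>0,\ t\le1\mid \beta(0)=r\sqrt 2/\text{(scaling)})$, which defines $\psi(r)$ as a bounded continuous function. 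Absorbing the constant $c\,\mathbf E[\Delta^{(1)}(B(1))\mid\cdots]$ into $\psi$ and recalling $v_1(x)\asymp V^{(k-1)}(x_1,\dots,x_{k-1})$ (Proposition 4 of \cite{DW10}) gives \eqref{L4.1}.

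For the bound \eqref{L4.2}, the point is that for the top walk to survive it is cheap only when $r$ is not too large, but growing $r$ cannot help beyond a point: one bounds $\mathbf P(\tau_x>n)\le \mathbf P(\tau_x^{(1)}>n)$ crudely, which already gives a constant; to extract the power $a^{k-1}$ one instead notes that $\psi(a)$ equals (up to constants) $\mathbf E[\Delta^{(1)}(B(1))\,h_a(B^{(k)}(1))\mid \tau^{bm}>1]$ where $h_a$ is the hitting probability factor, and uses the polynomial growth $V^{(k-1)}\asymp\Delta^{(1)}$ together with the fact that adding one more ordered coordinate at height $a$ multiplies the Vandermonde-type harmonic function by a factor $\le C\prod_{i<k}(a+|x_i|)\le Ca^{k-1}$ for large $a$. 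Equivalently, $\psi(a)/a^{k-1}$ has a finite limit as $a\to\infty$ (it tends to the normalization constant of the full $k$-dimensional harmonic function) and $\psi$ is continuous and vanishes at $0$, so it is bounded by $Ca^{k-1}$ on all of $(0,\infty)$.

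The main obstacle I expect is step (ii)–(iii): making the coupling of the bottom block and the top coordinate simultaneously precise enough that the two survival events become asymptotically independent, while the top coordinate is on scale $\sqrt n$ and the bottom ones are $O(1)$ at time $0$ but $O(\sqrt n)$ at time $n$. One has to run the walk for a short initial time $n^{1-\gamma}$ to let the bottom coordinates reach the diffusive scale (as in \eqref{L1.7b}), check that during this time the top walk — being far above — cannot be caught, and only then apply the joint Brownian approximation on $[n^{1-\gamma},n]$; controlling the error from this two-stage procedure, uniformly in the (fixed) starting configuration, is the delicate part.
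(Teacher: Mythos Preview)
Your overall architecture --- localize to $\nu_n\le n^{1-\varepsilon}$, then couple with Brownian motion on the diffusive scale --- is the paper's as well, but step~(iii) contains a genuine error. You assert that under the conditioning $\{\tau_x^{(1)}>n\}$ the gap $S_k-S_{k-1}$ is, in the limit, an \emph{independent} one-dimensional Brownian motion, so that $\psi(r)$ reduces to a single-BM survival probability. This is false: although $S_k$ is independent of $(S_1,\dots,S_{k-1})$, the coordinate $S_{k-1}$ under $\{\tau_x^{(1)}>n\}$ is the top curve of a $(k-1)$-dimensional Dyson-type ensemble and is biased upward; the law of $B_k-B_{k-1}$ under that conditioning is not Brownian. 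A quick sanity check shows your $\psi$ cannot be right: a one-dimensional survival probability satisfies $\psi(r)\sim cr$ as $r\downarrow 0$, but for small $r$ the full $k$-walk estimate gives $\mathbf P(\tau_x>n)\asymp \Delta(x)n^{-k(k-1)/4}\asymp r^{k-1}v_1(x)n^{-(k-1)(k-2)/4}$, forcing $\psi(r)\asymp r^{k-1}$ near $0$ --- which is exactly what \eqref{L4.2} records. The paper avoids the decoupling entirely: it couples the \emph{whole} $k$-dimensional walk with a $k$-dimensional Brownian motion after $\nu_n$ and defines $\psi(r)$ as the genuine conditional probability \eqref{L4.5}, i.e.\ the ratio of the $k$-dimensional to the $(k-1)$-dimensional Brownian survival probabilities.

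Your derivation of \eqref{L4.2} is also flawed. The implication ``$\psi$ continuous, $\psi(0)=0$, and $\psi(a)/a^{k-1}$ has a finite limit at infinity, hence $\psi(a)\le Ca^{k-1}$ on $(0,\infty)$'' is false: none of these hypotheses controls $\psi(a)/a^{k-1}$ for small $a$ (and with your one-dimensional $\psi$ this ratio in fact blows up). The paper obtains \eqref{L4.2} not from the Brownian limit but from a direct random-walk upper bound: after localizing to $\{\nu_n\le n^{1-\varepsilon},\ |S(\nu_n)|\le \theta_n\sqrt n\}$ one applies inequality~(33) of \cite{DW10} with the full Vandermonde $\Delta$, and then on $W_{n,\varepsilon}$ bounds $\Delta(y)=\Delta^{(k-1)}(y)\prod_{i<k}(y_k-y_i)\le C r^{k-1}n^{(k-1)/2}\Delta^{(k-1)}(y)$. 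This yields \eqref{L4.4}, from which \eqref{L4.2} follows immediately once \eqref{L4.1} is known.
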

\begin{proof}
It is clear that
$$
\mathbf{P}(\tau_x>n)=\mathbf{P}(\tau_x>n,\nu_n\leq n^{1-\varepsilon})+O\left(e^{-Cn^{\varepsilon}}\right).
$$
Furthermore,
\begin{align*}
\mathbf{P}\left(\tau_x>n,\nu_n\leq n^{1-\varepsilon},|S_k(\nu_n)|\geq \theta_n\sqrt{n}\right)
&\leq \mathbf{P}\left(\max_{j\leq n^{1-\varepsilon}}|S_k(j)|\geq \theta_n\sqrt{n}\right)\mathbf{P}(\tau_x^{(1)}>n)\\
&=o\left(n^{-(k-1)(k-2)/4}\right)
\end{align*}
and, in view of Lemma 16 from \cite{DW10},
\begin{align*}
\mathbf{P}(\tau_x^{(1)}>n,|S(\nu_n)|>\sqrt{n},\nu_n\leq n^{1-\varepsilon})=o\left(n^{-(k-1)(k-2)/4}\right).
\end{align*}
As a result we have
\begin{equation}
\label{L4.3}
\mathbf{P}(\tau_x>n)=\mathbf{P}(\tau_x>n,|S(\nu_n)|\leq\theta_n\sqrt{n},\nu_n\leq n^{1-\varepsilon})+o\left(n^{-(k-1)(k-2)/4}\right).
\end{equation}

Applying inequality (33) from \cite{DW10}, we obtain the bound
\begin{align*}
&\mathbf{P}\left(\tau_x>n,\nu_n\leq n^{1-\varepsilon},|S(\nu_n)|\leq \theta_n\sqrt{n}\right)\\
&\quad\leq\frac{C}{n^{k(k-1)/4}}\mathbf{E}\left[\Delta(x+S(\nu_n)),\tau_x>\nu_n,|S(\nu_n)|\leq \theta_n\sqrt{n}\right]\\
&\quad\leq\frac{Cr^{k-1}}{n^{(k-1)(k-2)/4}}\mathbf{E}\left[\Delta^{(k-1)}(x+S(\nu_n)),\tau_x^{(1)}>\nu_n\right]
\end{align*} 
Noting that the expectation on the right converges to $v_1(x)$ and taking into account (\ref{L4.3}), we obtain finally
\begin{equation}
\label{L4.4}
n^{(k-1)(k-2)/4}\mathbf{P}(\tau_x>n)\leq Cr^{k-1}v_1(x).
\end{equation}

Using coupling one can show that, uniformly in $x=(x_1,x_2,\ldots,x_k)\in W_{n,\varepsilon}$ with $|x_j|\leq\theta_n\sqrt{n}$
and $|x_k-r\sqrt{n}|\leq \theta_n\sqrt{n}$, holds
\begin{align*}
\mathbf{P}(\tau_x>n)\sim \mathbf{P}(\tau_x^{bm}>n)\sim \frac{\Delta^{(k-1)}(x)}{n^{(k-1)(k-2)/4}}\psi(r),
\end{align*}
where
\begin{equation}\label{L4.5}
\psi(r):=\lim_{a\to0}
\frac{\mathbf{P}\left(B_1(t)<a+B_2(t)<\ldots<(k-2)a+B_{k-1}(t)<r+B_k(t),\ t\leq1\right)}
{\mathbf{P}\left(B_1(t)<a+B_2(t)<\ldots<(k-2)a+B_{k-1}(t),\ t\leq1\right)}.
\end{equation}
Consequently,
\begin{align*}
&\mathbf{P}\left(\tau_x>n,\nu_n\leq n^{1-\varepsilon},|S(\nu_n)|\leq \theta_n\sqrt{n}\right)\\
&\sim\quad\frac{\psi(r)}{n^{(k-1)(k-2)/4}}
\mathbf{E}\left[\Delta^{(k-1)}(x+S(\nu_n))\tau_x>\nu_n,\nu_n\leq n^{1-\varepsilon},|S(\nu_n)|\leq \theta_n\sqrt{n}\right]\\
&\sim \quad\frac{\psi(r)}{n^{(k-1)(k-2)/4}}v_1(x),
\end{align*}
where in the last step we have used Lemmas 15 and 16 from \cite{DW10}. Combining this relation with (\ref{L4.3}), we get (\ref{L4.1}),
and (\ref{L4.2}) follows from (\ref{L4.4}).
\end{proof}

\begin{proof}[Proof of Theorem \ref{T1}]
Denote
$$
T^+=\min\{j\geq1:X_k(j)\geq an^{1/2}\},\quad T^-=\min\{j\geq1:X_1(j)\leq -an^{1/2}\}
$$
and
$$
T=\min\{T^+,T^-\}.
$$

We first derive an upper bound for $\mathbf{P}(\tau_x>n)$. Our starting point will be the following inequality
\begin{equation}
\label{T1.2}
\mathbf{P}(\tau_x>n)\leq\sum_{l=1}^{n/2}\mathbf{P}(\tau_x>n,T=l)+\mathbf{P}(\tau_x>n/2,T>n/2).
\end{equation}
According to Lemma \ref{L2},
\begin{equation}\label{T1.3}
\mathbf{P}(\tau_x>n/2,T>n/2)\leq \frac{C a^{k-1-\alpha}}{n^{\alpha/2+(k-1)(k-2)/4}}.
\end{equation}
Applying Lemma \ref{L3} to $(S_1,S_2,\ldots,S_{k-1})$, we conclude that, for every $l\leq n/2$, holds
\begin{align*}
\mathbf{P}(\tau_x>n,T^+=l)&\leq\int_W\mathbf{P}(x+S(l)\in dy,\tau_x>l,T^+=l)\mathbf{P}(\tau_y^{(1)}>n/2)\\
&\leq\frac{C}{n^{(k-1)(k-2)/4}}\mathbf{E}\left[v_1(x+S(l)),\tau_x>l,T^+=l\right]\\
&\leq \frac{C}{n^{(k-1)(k-2)/4}}\frac{p}{(an^{1/2})^\alpha}\mathbf{E}\left[v_1(x+S(l-1)),\tau_x>l-1\right].
\end{align*}
And an analogous inequality holds for $\mathbf{P}(\tau_x>n,T^-=l)$. As a result we have
\begin{equation}
\label{T1.4}
\sum_{l=N}^{n/2}\mathbf{P}(\tau_x>n,T^+=l)\leq \frac{Ca^{-\alpha}}{n^{\alpha/2+(k-1)(k-2)/4}}\sum_{l=N}^{\infty}\mathbf{E}\left[v(x+S(l-1)),\tau_x>l-1\right].
\end{equation}

For every fixed $l$ we have
\begin{align*}
&\mathbf{P}(\tau_x>n,T^+=l)=\int_W\mathbf{P}(x+S(l)\in dy,\tau_x>l,T^+=l)\mathbf{P}(\tau_y>n-l)\\
&\quad\sim n^{-(k-1)(k-2)/4}\mathbf{E}\left[v_1(x+S(l))\psi\left(\frac{X_k(l)}{\sqrt{n}}\right),\tau_x>l,T^+=l\right]\\
&\quad\sim n^{-(k-1)(k-2)/4}\mathbf{E}\left[v_1(x+S(l))\psi\left(\frac{X_k(l)}{\sqrt{n}}\right),\tau_x>l,T^+=l\right]\\
&\quad\sim n^{-(k-1)(k-2)/4}\mathbf{E}\left[v_1(x+S(l-1)),\tau_x>l-1\right]
\mathbf{E}\left[\psi\left(\frac{X_k(l)}{\sqrt{n}}\right),T^+=l\right].
\end{align*}
Noting that
$$
\mathbf{E}\left[\psi\left(\frac{X_k(l)}{\sqrt{n}}\right),T^+=l\right]
\sim pn^{-\alpha/2}\int_a^\infty\psi(y)\alpha y^{-\alpha-1}dy=:\theta(a),
$$
we obtain
\begin{align*}
\mathbf{P}(\tau_x>n,T^+=l)\sim p\theta(a)n^{-\alpha/2-(k-1)(k-2)/4}\mathbf{E}\left[v_1(x+S(l-1))\tau_x>l-1\right].
\end{align*}
In the same way one can get
\begin{align*}
\mathbf{P}(\tau_x>n,T^-=l)\sim q\theta(a)n^{-\alpha/2-(k-1)(k-2)/4}\mathbf{E}\left[v_2(x+S(l-1))\tau_x>l-1\right].
\end{align*}
Therefore,
\begin{equation}
\label{T1.5}
\sum_{l=1}^{N-1}\mathbf{P}(\tau_x>n,T=l)\sim \theta(a)n^{-\alpha/2-(k-1)(k-2)/4}
\sum_{l=1}^{N-1}\mathbf{E}\left[v(x+S(l-1))\tau_x>l-1\right].
\end{equation}
Combining (\ref{T1.3}) --- (\ref{T1.5}) and noting that (\ref{L4.2}) yields $\theta(a)\leq\theta(0)<\infty$, we see that
\begin{align*}
\limsup_{n\to\infty}n^{\alpha/2+(k-1)(k-2)/4}\mathbf{P}(\tau_x>n)\leq
\theta(0)\sum_{l=1}^{\infty}\mathbf{E}\left[v(x+S(l-1))\tau_x>l-1\right]\\
+Ca^{-\alpha}\sum_{l=N}^{\infty}\mathbf{E}\left[v(x+S(l-1)),\tau_x>l-1\right]+
Ca^{k-1-\alpha}.
\end{align*}
Letting here first $N\to\infty$ and then $a\to0$, we get
\begin{equation}
\label{T1.6}
\limsup_{n\to\infty}n^{\alpha/2+(k-1)(k-2)/4}\mathbf{P}(\tau_x>n)\leq \theta(0)U(x).
\end{equation}
To obtain a corresponding lower bound we note that, for every $N\geq1$,
$$
\mathbf{P}(\tau_x>n)\geq\sum_{l=1}^{N-1}\mathbf{P}(\tau_x>n,T=l).
$$
Using now (\ref{T1.5}), we have
$$
\liminf_{n\to\infty}n^{\alpha/2+(k-1)(k-2)/4}\mathbf{P}(\tau_x>n)\geq
\theta(a)\sum_{l=1}^{N-1}\mathbf{E}\left[v(x+S(l-1))\tau_x>l-1\right].
$$
Since $N$ can be chosen arbitrary large
$$
\liminf_{n\to\infty}n^{\alpha/2+(k-1)(k-2)/4}\mathbf{P}(\tau_x>n)\geq
\theta(a)U(x).
$$
Finally, it follows from (\ref{L4.2}) that $\theta(a)=\theta(0)+O(a^{k-1-\alpha})$.
Hence,
$$
\liminf_{n\to\infty}n^{\alpha/2+(k-1)(k-2)/4}\mathbf{P}(\tau_x>n)\geq
\theta(0)U(x).
$$
>From this inequality and (\ref{T1.6}) we conclude that (\ref{T1.1}) holds with $\theta=\theta(0)$.
\end{proof}
\section{Proof of Theorem~\ref{T2}}
We have to show that
\begin{equation}
\label{T2.1}
\mathbf{E}[f(X^{(n)})|\tau_x>n]\to\mathbf{E}[f(X)]
\end{equation}
for every bounded and continuous $f:C[0,1]\to\mathbb{R}$.

We first note that it suffices to prove that
\begin{align}
\label{T2.2}
\nonumber
&\lim_{a\to0}\lim_{n\to\infty}n^{\alpha/2+(k-1)(k-2)/4}\mathbf{E}[f(X^{(n)}), T=l,X_k(l)>an^{1/2},\tau_x>n]\\
&\hspace{1cm}=p\mathbf{E}[v_1(x+S(l-1)),\tau_x>l-1]\mathbf{E}[f(X),X_k(0)>0].
\end{align}
for every fixed $l$. Indeed, in view of the symmetry,
\begin{align}
\label{T2.3}
\nonumber
&\lim_{a\to0}\lim_{n\to\infty}n^{\alpha/2+(k-1)(k-2)/4}\mathbf{E}[f(X^{(n)}), T=l,X_1(l)<-an^{1/2},\tau_x>n]\\
&\hspace{1cm}=q\mathbf{E}[v_2(x+S(l-1)),\tau_x>l-1]\mathbf{E}[f(X),X_1(0)<0].
\end{align}
Then, combining (\ref{T2.2}) and (\ref{T2.3}), we get
\begin{align*}
\nonumber
&\lim_{a\to0}\lim_{n\to\infty}\mathbf{E}[f(X^{(n)}), T\leq N|\tau_x>n]\\
&\hspace{1cm}\sim\frac{\sum_{l=0}^{N-1}\mathbf{E}[v(x+S(l-1)),\tau_x>l-1]}{U(x)}\mathbf{E}[f(X)]
\end{align*}
Using Proposition~\ref{P1} and Lemma~\ref{L2}, we get (\ref{T2.1}).

In order to prove (\ref{T2.2}) we assume first that our random walk starts from $x$ with $|x_1|<A,\ldots,|x_{k-1}|<A$
and $x_k>an^{1/2}$. It is easy to see that
\begin{align*}
&\mathbf{P}\left(\tau_x>n,\nu_n\leq n^{1-\varepsilon},|S_k(\nu_n)|>n^{1/2-\varepsilon/4}\right)\\
&\leq\mathbf{P}\left(\max_{j\leq n^{1-\varepsilon}}|S_k(\nu_n)|>n^{1/2-\varepsilon/4}\right)\mathbf{P}(\tau_x^{(1)}>n)
=o\left(\frac{1}{n^{(k-1)(k-2)/4}}\right).
\end{align*}
Furthermore, it follows from Lemma~16 of \cite{DW10} that
$$
\mathbf{P}\left(\tau_x^{(1)}>n,\nu_n\leq n^{1-\varepsilon},|S(\nu_n)|>\theta_n n^{1/2}\right)
=o\left(\frac{1}{n^{(k-1)(k-2)/4}}\right).
$$
As a result we have the following representation
\begin{align*}
\mathbf{E}\left[f(X^{(n)}),\tau_x>n\right]&=
\mathbf{E}\left[f(X^{(n)}),\tau_x>n,|S(\nu_n)|\leq\theta_n n^{1/2},\nu_n\leq n^{1-\varepsilon}\right]\\
&\hspace{2cm}+o\left(\frac{1}{n^{(k-1)(k-2)/4}}\right).
\end{align*}
Further,
\begin{align*}
&\mathbf{E}\left[f(X^{(n)}),\tau_x>n,|S(\nu_n)|\leq\theta_n n^{1/2},\nu_n\leq n^{1-\varepsilon}\right]\\
&=\sum_{l=1}^{n^{1-\varepsilon}}
\int_W\mathbf{P}\left(x+S_l\in dy,\tau_x>l,\nu_n=l,|S(l)|\leq\theta_n n^{1/2}\right)
\mathbf{E}\left[f_{l,y}(X^{(n)}),\tau_y>n-l\right],
\end{align*}
where 
$$
f_{l,y}(u)=f\left(y{\rm 1}_{\{t\leq l/n\}}+u(t){\rm 1}_{\{t> l/n\}}\right),\quad u\in C[0,1].
$$ 

Using coupling, we obtain
\begin{align*}
\mathbf{E}\left[f_{l,y}(X^{(n)}),\tau_y>n-l\right]
&\sim \mathbf{E}[f(X)|X_k(0)=x_k/\sqrt{n}]\mathbf{P}(\tau^{bm}_y>n)\\
&\sim \mathbf{E}[f(X)|X_k(0)=x_k/\sqrt{n}]\frac{\Delta^{(1)}(y)\psi(x_k/\sqrt{n})}{n^{(k-1)(k-2)/4}}.
\end{align*}
This implies that
\begin{align*}
&\mathbf{E}\left[f(X^{(n)}),\tau_x>n,|S(\nu_n)|\leq\theta_n n^{1/2},\nu_n\leq n^{1-\varepsilon}\right]\\
&\hspace{1cm}\sim\frac{\mathbf{E}[f(X)|X_k(0)=x_k/\sqrt{n}]\psi(x_k/\sqrt{n})}{n^{(k-1)(k-2)/4}}\\
&\hspace{3cm}\times\mathbf{E}\left[\Delta^{(1)}(x+S(\nu_n)),\tau_x>\nu_n,\nu_n\leq n^{1-\varepsilon},|S(\nu_n)|\leq\theta_n n^{1/2}\right]\\
&\hspace{1cm}\sim v_1(x)\frac{\mathbf{E}[f(X)|X_k(0)=x_k/\sqrt{n}]\psi(x_k/\sqrt{n})}{n^{(k-1)(k-2)/4}},
\end{align*}
where in the last step we used Lemmas 15 and 16 from \cite{DW10}. Therefore,
\begin{align*}
&\mathbf{E}[f(X^{(n)}), T=l,X_k(l)>an^{1/2},\tau_x>n]\\
&\hspace{1cm}\sim\int_W\mathbf{P}(x+S(l)\in dy,\tau_x>l,T=l,X_k(l)>an^{1/2})\\
&\hspace{3cm}\times\frac{v_1(y)}{n^{(k-1)(k-2)/4}}
\mathbf{E}[f(X)|X_k(0)=y_k/\sqrt{n}]\psi(y_k/\sqrt{n})\\
&\hspace{1cm}\sim \frac{p\mathbf{E}[v_1(x+S(l-1),\tau_x>l-1]}{n^{\alpha/2+(k-1)(k-2)/4}}
\int_a^\infty\mathbf{E}[f(X)|X_k(0)=z]\psi(z)\alpha z^{-\alpha-1}dz.
\end{align*}
Since
$$
\lim_{a\to0}\int_a^\infty\mathbf{E}[f(X)|X_k(0)=z]\psi(z)\alpha z^{-\alpha-1}dz=\mathbf{E}[f(X),X_k(0)>0],
$$
the previous relation implies (\ref{T2.1}). Thus, the proof is finished.

\vspace{12pt}

{\bf Acknowledgement} We are grateful to Martin Kolb for
attracting our attention to paper \cite{JW02}.

\end{document}